\newtheorem{thm}{Theorem}[section]
\newtheorem{lem}[thm]{Lemma}
\newtheorem{rem}[thm]{Remark}
\numberwithin{equation}{section} \numberwithin{thm}{section}
\newcommand{\be}{\begin{equation}}
\newcommand{\ee}{\end{equation}}
\newcommand{\bea}{\begin{eqnarray}}
\newcommand{\eea}{\end{eqnarray}}
\begin{document}
\sloppy
\title{On Kakeya's Geometric Proof of Enestr\"om-Kakeya's Theorem}

%% Title, authors and addresses

%% use the tnoteref command within \title for footnotes;
%% use the tnotetext command for theassociated footnote;
%% use the fnref command within \author or \address for footnotes;
%% use the fntext command for theassociated footnote;
%% use the corref command within \author for corresponding author footnotes;
%% use the cortext command for theassociated footnote;
%% use the ead command for the email address,
%% and the form \ead[url] for the home page:
%% \title{Title\tnoteref{label1}}
%% \tnotetext[label1]{}
%% \author{Name\corref{cor1}\fnref{label2}}
%% \ead{email address}
%% \ead[url]{home page}
%% \fntext[label2]{}
%% \cortext[cor1]{}
%% \address{Address\fnref{label3}}
%% \fntext[label3]{}

%\author[MathSciCai]{M.~H.~Annaby \corref{cor1}}
%\ead{mhannaby@sci.cu.edu.eg}
%\author[MathFym]{S.R. El-Sayed}
%\cortext[cor1]{Corresponding author}
%\address[MathSciCai]{Department of Mathematics, Faculty of Science,
%Cairo University, Giza 12613, Egypt}
%\address[MathFym]{Department of Mathematics, Faculty of Science, Fayoum University, Fayoum, Egypt}
\author{M.~H.~Annaby         \and
        S.~R.~Elsayed-Abdullah
}

\institute{M.~H.~Annaby \at
              Department of Mathematics, Faculty of Science, Cairo University, 12613 Giza, Egypt \\
              %Tel.: +20-10-14455876\\
              %Fax: +20-23-572884\\
              \email{mhannaby@sci.cu.edu.eg}           %  \\
%             \emph{Present address:} of F. Author  %  if needed
           \and
           S.~R.~Elsayed-Abdullah \at
           Department of Mathematics, Faculty of Science, Fayoum University, Fayoum, Egypt \\
           \email{sre12@fayoum.edu.eg}
}

\date{Received: date / Accepted: date}
\maketitle

\begin{abstract} 
This paper is devoted to demonstrate Kakeya's geometric proof 
of his theorem (1912), independently established earlier by Enestr\"om (1893). By calculating centers and radii of the interlacing circles of Kakeya's method, we prove Kakeya's geometric structure, which has not been previously established.  We give an equivalent proof, which is based on the construction of internally interlacing circles, which has been geometrically  considered by Tomic (1948).
\end{abstract}

\keywords{Enestr\"om-Kakeya's Theorem \and zeros of polynomials}

%\end{frontmatter}

%%%%%%%%%%%%%%%%%%%%%%%%%%%%%%%
\section{Introduction}

%%%%%%%%%%%%%%%%%%%%%%%%%%%%
In the following $\Bbb N, \Bbb N_0,\Bbb R,\Bbb C$ denote the sets $\{1,2,\cdots\},\; \Bbb N\cup \{0\},$ the set of real numbers and the set of complex numbers respectively. A complex number is considered as an ordered pair, a point or a vector in $\Bbb R^2.$ Thus we may write for $z=x+iy\in\Bbb C,\; z=\vec{z}=\overrightarrow{OP}=(x,y),\,x,y\in\Bbb R$, where $O$ is the origin and $P$ is the point representing $z$ in the complex plane, or in $\Bbb R^2.$ The circles $\mathcal{C}_1,\;\mathcal{C}_2,\cdots, \mathcal{C}_n,\; n\in \Bbb N$ are called internally interlacing if $\mathcal{C}_k$ is completely contained inside $\mathcal{C}_{k-1},$ except possibly for one touching point, $k=1,2,\cdots,n$. If for all $k$, $\mathcal{C}_{k-1}$ is completely contained inside $\mathcal{C}_{k},$ except possibly for one point, the circles are called externally interlacing. If  $\mathcal{C}_1,\;\mathcal{C}_2,\cdots, \mathcal{C}_n$ are internally interlacing, then  $\mathcal{C}_n,\;\mathcal{C}_{n-1},\cdots, \mathcal{C}_1$ are externally interlacing and vice versa.

Let $a_0,a_1,\cdots,a_n$ be positive numbers, which are not all equal, and $P(z)$ be the polynomial
\begin{equation}\label{eq:1.1}
	P(z)=a_nz^n+a_{n-1}z^{n-1}+\cdots+a_1z+a_0.
\end{equation}
Enestr\"om-Kakeya's theorem states that the zeros of $P(z)$ lie in the annulus, \cite{Enes2,kak},
\begin{equation}\label{eq:1.2}
	\min_{1\leq k \leq n}\frac{a_{k-1}}{a_k} < \left|z\right|<\max_{1\leq k \leq n}\frac{a_{k-1}}{a_k}.
\end{equation}
In particular, if $a_n\geq a_{n-1}\geq \cdots \geq a_1\geq a_0 >0$, then the zeros of $P(z)$ lie in the open unit disc $|z|<1$.

Enestr\"om's algebraic proof,\cite{Enes1,Enes2} is more restrictive than Kakeya's one as he proved that the zeros of $P(z)$ lie in the closure of the  annulus (\ref{eq:1.2}). See also, \cite{Gardner2014} and the brief account \cite[pp. 271-2]{RaScm1}. Kakeya's proof is based on an elegant geometric proof of the following lemma 
\begin{lem}\label{lem:1.1}
If $p_n\geq p_{n-1}\geq \cdots \geq p_1\geq p_0 >0$ are positive numbers, $\theta \in \Bbb R$, $n \in \Bbb N_0$, then
\begin{equation}\label{eq:1.3}
R_n=p_0+p_1e^{i\theta}+p_2e^{2i\theta}+\cdots+p_ne^{in\theta}\ne 0.
\end{equation}
\end{lem}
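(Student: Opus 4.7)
My plan is to attack the lemma algebraically by Abel's summation by parts, and then invoke the equality case of the triangle inequality; Kakeya's geometric picture will, I suspect, turn out to visualise essentially this same calculation via the polygonal path of partial sums $S_k=p_0+p_1e^{i\theta}+\cdots+p_ke^{ik\theta}$.

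The first step is to form $(1-e^{i\theta})R_n$ and telescope to obtain the identity
\[
(1-e^{i\theta})R_n \;=\; p_0 \;+\; \sum_{k=1}^{n}(p_k-p_{k-1})\,e^{ik\theta} \;-\; p_n e^{i(n+1)\theta},
\]
whose coefficients $p_0,\,p_1-p_0,\ldots,p_n-p_{n-1}$ are all non-negative by monotonicity and sum to $p_n$. Supposing $R_n=0$ for contradiction, the identity rearranges to $p_n e^{i(n+1)\theta} = p_0 + \sum_{k=1}^n (p_k-p_{k-1})\,e^{ik\theta}$. The left side has modulus exactly $p_n$, while the triangle inequality bounds the right side by the sum of these same coefficients, which is again $p_n$; equality must therefore hold throughout. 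This forces every nonzero summand on the right to share a common argument, and since $p_0>0$ is positive real that argument must be $0$. Consequently $e^{ik\theta}=1$ for every $k$ with $p_k>p_{k-1}$, and also $e^{i(n+1)\theta}=1$.

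The final step is to derive a contradiction from these alignment conditions. The case $\theta\equiv 0\pmod{2\pi}$ is immediate, since then $R_n=\sum p_k>0$. For $\theta\not\equiv 0\pmod{2\pi}$, one invokes the (implicit) non-degeneracy of the coefficient sequence---for instance strict monotonicity, under which every jump $p_k-p_{k-1}$ is positive, so $e^{ik\theta}=1$ must hold for every $k\in\{1,\dots,n\}$ and in particular $\theta\equiv 0\pmod{2\pi}$, a contradiction.

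The main obstacle I expect is precisely this concluding step. With only weak monotonicity there are resonant configurations (constant coefficients paired with $\theta$ a primitive $(n+1)$-th root of unity being the prototype) where equality in the triangle inequality actually is attainable, so the strength of the non-degeneracy assumption must match the strength of the desired conclusion. Identifying the correct tacit hypothesis, and ruling out these resonant alignments of the jumps of $\{p_k\}$ with roots of unity of order dividing $n+1$, is the delicate part of the argument; the geometric circle construction the authors propose presumably serves precisely to make this rigidity visible to the eye.
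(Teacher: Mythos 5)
Your proof is correct (under the non-degeneracy hypothesis you identify) but it is a genuinely different argument from the paper's. The paper carries out Kakeya's \emph{geometric} program: it computes the centers $C_k$ and radii $r_k$ of circles $\mathcal{C}_0,\ldots,\mathcal{C}_n$ through the partial sums $R_k$, verifies $r_{k+1}-r_k=\|C_{k+1}-C_k\|$ so that the circles are externally interlacing and touch only at the points $R_k$ (this is (\ref{eq:1.5})), and concludes $R_n\neq 0$ because $R_n$ lies on $\mathcal{C}_n$, which stays outside $\mathcal{C}_0\ni O$ except for touching points distinct from the origin. Your route---multiply by $(1-e^{i\theta})$, telescope, and exploit the equality case of the triangle inequality---is the classical algebraic approach (essentially Enestr\"om's, sharpened from the closed annulus to the open one by the equality analysis). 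What your approach buys: it treats all $\theta\in\Bbb R$ at once, whereas the paper argues separately for $0<\theta<\pi$, $\theta=0$, $\theta=\pi$, and angles in the lower half-plane; and it yields the sharpest possible statement, since your equality conditions show that $R_n=0$ forces $e^{i(n+1)\theta}=1$ together with $e^{ik\theta}=1$ at every index of strict increase $p_{k-1}<p_k$, which characterizes exactly the resonant configurations in which the lemma genuinely fails. What the paper's approach buys is the explicit interlacing-circle structure, which is its stated purpose: completing Kakeya's 1912 geometric sketch, not finding the shortest proof. Finally, your caveat about the tacit hypothesis is well taken rather than a defect: the lemma as stated, with weak inequalities, is false (all $p_k$ equal and $e^{i\theta}$ a primitive $(n+1)$-st root of unity gives $R_n=0$, as the paper itself concedes via (\ref{eq:1.6})), and the paper's proof likewise assumes the $p_k$ distinct---an assumption it calls ``without loss of generality'' and repairs for ties only by informal remarks about coinciding circles; your equality analysis pins down the failure set more precisely than that discussion does.
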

Kakeya's Proof of Lemma \ref{lem:1.1} is based on proving that the points of $\Bbb C,$
\begin{equation}\label{eq:1.4}
	R_0=p_0,\ R_k=\sum_{m=0}^{k}p_m\,e^{im\theta},\ k=1,2,\cdots,n,
\end{equation}
lie on externally interlacing circles $\mathcal{C}_0,\;\mathcal{C}_1,\cdots, \mathcal{C}_n$ with centers and radii $C_0, C_1,\cdots, C_n$ and $r_0, r_1,\cdots, r_n$ respectively, for which 
\begin{equation}\label{eq:1.5}
	\mathcal{C}_{k-1}\cap\mathcal{C}_k=\{R_{k-1}\}, \ r_k-r_{k-1}=\|C_k-C_{k-1}\|,
\end{equation}
provided that $p_0,p_1,\cdots,p_n$ are all different, $\|\cdot\|$ denotes the Euclidean distance. If some of the $p_j's$ are equal, the geometric proof remains true, but with a slight modification of the geometric structure. If $p_{j-1}=p_j$ for some $j=1,2,\cdots,n,$ then, the circles $\mathcal{C}_{j-1}\mbox{ and }\mathcal{C}_j$ coincide to each other, i.e. $\mathcal{C}_{j-1}=\mathcal{C}_j$ as sets. 
Also,if $p_{j-1}=p_j$, $j=1,2,\cdots,n,$ the lemma is  true only if $e^{i\theta}$ is not an $n$-th root of unity because
\begin{equation}\label{eq:1.6}
	1+z+z^2+\cdots+z^n=\frac{1-z^{n+1}}{1-z},\ z=e^{i\theta}.
\end{equation}

At this point, it is worthwhile to mention that Enestr\"om's algebraic proof and Kakeya's geometric one are identical, apart from the restriction in Enestr\"om's  proof. However, Kakeya's  proof is incomplete as he only  demonstrated  the cases of $\mathcal{C}_0,\,\mathcal{C}_1,\,\mathcal{C}_2$ of  the construction (\ref{eq:1.5}) and claimed that the rest can be derived iteratively without filling in a proof. In addition, he did not provide any calculations of circles centers and radii. Furthermore, his single geometric figure is a troublesome and may be misleading, probably due to the typing-printing limitations of his era.

The first aim of this paper is to derive a complete proof of the geometric construction (\ref{eq:1.5}). We start our demonstrations by computing the particular cases of  $\mathcal{C}_0,\,\mathcal{C}_1,\,\mathcal{C}_2$, which shed light on the general case. Special and general cases are presented in Sections 2 and 3 respectively. We point out to the misleading of Kakeya's own figure. Section 3 involves the general construction (\ref{eq:1.5}) and hence a rigorous geometric proof of (\ref{eq:1.3}) is established. In an equivalent approach, we establish another geometric proof of  Enestr\"om-Kakeya's theorem, which is based on an internally interlacing circles version of Lemma \ref{lem:1.1}. This is done in the last section.

Before leaving this introduction we would like to mention the work of Tomic \cite{Tomic1948}, the first who gave a proof of Kakeya's geometric construction \ref{eq:1.5}. Tomic noticed in \cite{Tomic1948} that the geometric construction of Kakeya's needs a proof and tried to complete it. In his approach he established internally interlacing circles, rather than the externally interlacing ones of Kakeya. Nevertheless, Tomic did not compute the circles, which we rigorously do here.
%%%%%%%%%%%%%%%%%%%%%%%%%%%%%%%%%%%%%%%%% 
%%%%%%%%%%%%%%%%%%%%%%%%%%%%%%%%%%
\section{Special cases}

%%%%%%%%%%%%%%%%%%%%%%%%%%%%%%%%%%%%%%%%% 
In the following we prove of (\ref{eq:1.5}) when $n=2$.  We demnostrate the construction of Kakeya for $\mathcal{C}_0,\,\mathcal{C}_1,\,\mathcal{C}_2$. We assume without any loss of generality that $0<\theta<\pi$. The other cases will be discussed in the general setting. The construction of $\mathcal{C}_0$ is based on locating $R_0$ on the $X$-axis at distance $p_0$ from the origin $O$. The point $S_1$ is taken to be $S_1=\overrightarrow{OR_0}+\overrightarrow{R_0S_1}$, $\overrightarrow{R_0S_1}=p_0e^{i\theta}. $Thus $S_1=(p_0+p_0\cos\theta, p_0\sin \theta)$.  The circle $\mathcal{C}_0$ is the circle that passes through $O, R_0, S_1$. From Figure \ref{An1} (a), we see that $\triangle OR_0C_0\cong\triangle R_0S_1C_0$ and consequently $\measuredangle{OC_0R_0}=\measuredangle {R_0C_0O}=\theta$. Therefore, the center and the radius of $\mathcal{C}_0$ are 
\begin{equation}\label{eq:2.1}
\mathcal{C}_0\left(\frac{p_0}{2}, \frac{p_0}{2}\cot \frac{\theta}{2}\right), \ r_0= \frac{p_0}{2}\csc\frac{\theta}{2}.
\end{equation}

%%%%%%%%%%%%%%%%%%%%%%%%%%%%%%%%%%%%%%%

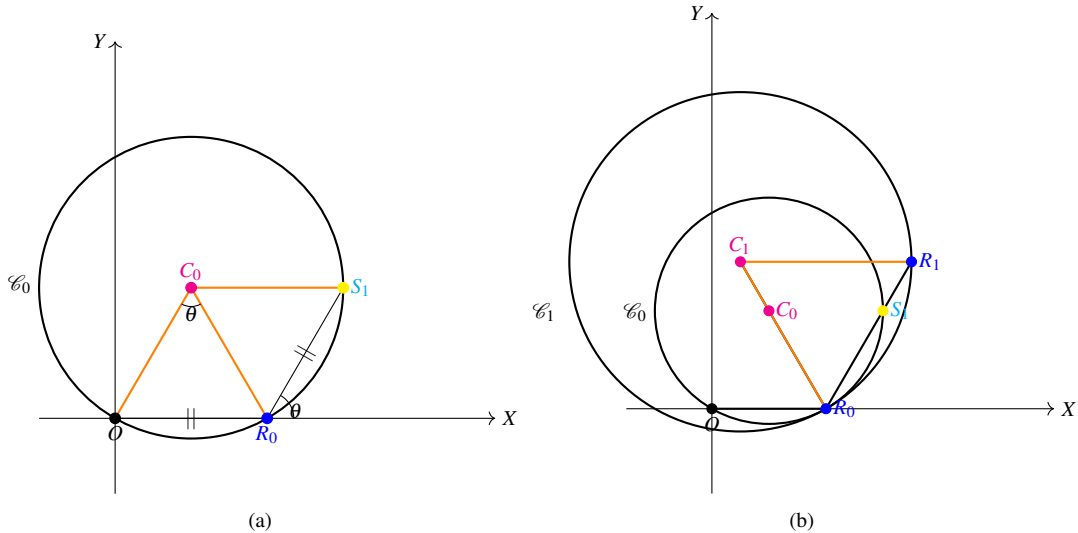
\begin{figure*}[!ht]
	\centering
	\subfloat[]{\begin{tikzpicture}[scale=2]
			\draw[thick] (0.5,0.86602540378)circle(1);
			%\draw[thick] (0,0)--(1,0);
			\draw   (0,0) -- node[sloped] {$||$} (1,0);
			\draw   (1,0) -- node[sloped] {$||$} (1.5,0.86602540378);
			\draw[thick,orange](1,0)--(0.5,0.86602540378);
			\draw[thick,orange](0.5,0.86602540378)--(0,0);
			%\draw[thick](1.5,0.86602540378)--(1,0);
			\draw[thick,orange](1.5,0.86602540378)--(0.5,0.86602540378);
			\draw[->](-0.5,0)--(2.5,0);
			\draw[->](0,-0.5)--(0,2.5);
			\node[below] at (0,0) {$O$};
			\filldraw[black] (0,0) circle (1pt);
			\node[above,magenta] at (0.5,0.86602540378) {$C_0$};
			\filldraw[magenta] (0.5,0.86602540378) circle (1pt);
			\node[below,blue] at (1,0) {$R_0$};
			\filldraw[blue] (1,0) circle (1pt);
			\node[right,cyan] at (1.5,0.86602540378) {$S_1$};
			\filldraw[yellow]  (1.5,0.86602540378) circle (1pt);
			\node[right] at (2.5,0) {$X$};
			\node[left] at (0,2.5) {$Y$};
			\coordinate (a) at (0,0); \coordinate (b) at (0.5,0.86602540378); \coordinate (c) at (1,0); \draw pic[draw,angle radius=0.25cm,"$\theta$",below] {angle=a--b--c};
			\coordinate (a) at (2,0); \coordinate (b) at (1,0); \coordinate (c) at (2,1.73205080757); \draw pic[draw,angle radius=0.35cm,"$\theta$",right] {angle=a--b--c};
			\node[black, below left] at (-0.5,1) {$\mathcal{C}_0$};
	\end{tikzpicture}}
	\subfloat[]{\begin{tikzpicture}[scale=1.5]
			\draw[thick] (0.5,0.86602540378)circle(1);
			\draw[thick] (0.25,1.29903810568)circle(1.5);
			\draw[thick] (0,0)--(1,0); \draw[thick](1,0)--(0.25,1.29903810568);
			\draw[thick,orange](0.25,1.29903810568)--(1,0);
			\draw[thick,orange](0.25,1.29903810568)--(1.75,1.29903810568);
			\draw[thick](1,0)--(1.75,1.29903810568);
			\draw[->](-0.75,0)--(3,0);
			\draw[->](0,-0.75)--(0,3.5);
			\node[below] at (0,0) {$O$};
			\filldraw[black] (0,0) circle (1.25pt);
			\node[right,magenta] at (0.5,0.86602540378) {$C_0$};
			\filldraw[magenta] (0.5,0.86602540378) circle (1.25pt);
			\node[right,blue] at (1,0) {$R_0$};
			\filldraw[blue] (1,0) circle (1.25pt);
			\node[right,cyan] at (1.5,0.86602540378) {$S_1$};
			\filldraw[yellow] (1.5,0.86602540378) circle (1.25pt);
			\node[above,magenta] at (0.25,1.29903810568) {$C_1$};
			\filldraw[magenta] (0.25,1.29903810568) circle (1.25pt);
			\node[right,blue] at (1.75,1.29903810568) {$R_1$};
			\filldraw[blue] (1.75,1.29903810568) circle (1.25pt);
			\node[right] at (3,0) {$X$};
			\node[left] at (0,3.5) {$Y$};
			\node[black, below left] at (-0.5,1) {$\mathcal{C}_0$};
			\node[black, below left] at (-1.3,1) {$\mathcal{C}_1$};
	\end{tikzpicture}}
	%\hspace{0.5mm}
	\caption{(a) $\triangle OC_0R_0\cong\triangle R_0C_0S_1$, since $OR_0=R_0S_1$. (b) $R_0$, $C_0$ and $C_1$ are collinear as Eq. (\ref{eq:2.4}) confirms.}
	\label{An1}
\end{figure*}

%%%%%%%%%%%%%%%%%%%%%%%%%%%%%%%%%%%%%%%%

If $p_0=p_1$, then $R_1$ is merely $S_1$, and $\mathcal{C}_1$ coincides with $\mathcal{C}_0$. Otherwise, $R_0S_1$ is extended to $R_0R_1$, where $\overrightarrow{R_0S_1}\parallel \overrightarrow{R_0R_1}$ and $\|R_0R_1\|=p_1$. Thus $R_1=(p_0+p_1\cos \theta, p_1\sin \theta)=\overrightarrow{OR_1}$. The circle $\mathcal{C}_1$ is determined by $R_0, R_1$ and the central angle $\measuredangle R_0C_1R_1$, which is taken to be of measure $\theta$. Therefore $\mathcal{C}_1$ is completely determined and we can see that its center and radius are
\begin{equation}\label{eq:2.2}
\mathcal{C}_1\left(p_0-\frac{p_1}{2}, \frac{p_1}{2}\cot \frac{\theta}{2}\right), \ r_1= \frac{p_1}{2}\csc\frac{\theta}{2}
\end{equation}
respectively. The circle $\mathcal{C}_1$ lies entirely outside $\mathcal{C}_0$ and it only touches it at $R_0$ because 
\begin{equation}\label{eq:2.3}
r_1-r_0=\|C_1-C_0\|=\frac{p_1-p_0}{2}\csc\frac{\theta}{2}.
\end{equation}

It is worthwhile to notice that $R_0,\, C_0,\, C_1$ are collinear, cf. Figure \ref{An1} (b). This is clear by (\ref{eq:2.3}) and 
\begin{eqnarray}\label{eq:2.4}
\begin{vmatrix}
p_0 & 0  & 1 \\
\frac{p_0}{2} & \frac{p_0}{2}\cot \frac{\theta}{2} & 1 \\
p_0-\frac{p_1}{2} & \frac{p_1}{2}\cot \frac{\theta}{2} & 1 
\end{vmatrix}=0.
\end{eqnarray}

%%%%%%%%%%%%%%%%%%%%%%%%%%%%%%%%%%

\noindent In Kakeya's single figure of \cite{kak}, $R_0,\,C_0,\; C_1$ are not looking to be collinear.

The construction of $\mathcal{C}_2$ is similar to that of $\mathcal{C}_1$. We first locate $S_2$ whose position vector is 
\begin{equation}\label{eq:2.5}
\overrightarrow{OS_2}=\overrightarrow{OR_0}+\overrightarrow{R_0R_1} +\overrightarrow{R_1S_2},\quad  \overrightarrow{R_1S_2}= p_1e^{2i\theta}.
\end{equation}
Thus the coordinates of $S_2$ are 
\begin{equation}\label{eq:2.6}
S_2\left(p_0+p_1\cos \theta+p_1\cos 2\theta, p_1\sin \theta+p_1\sin 2\theta \right).
\end{equation}
It is a trigonometry exercise to check that $S_2$ lies on $\mathcal{C}_1$. Again if $p_1=p_2$, then $S_2=R_2$, and $\mathcal{C}_2$ coincides with $\mathcal{C}_1$. Otherwise we extend $\overrightarrow{R_1S_2}$ to $\overrightarrow{R_1R_2}$,
\begin{equation}\label{eq:2.7}
\overrightarrow{R_1R_2}\parallel \overrightarrow{R_1S_2}, \quad \|\overrightarrow{R_1R_2}\|=p_2.
\end{equation}
The circle $\mathcal{C}_2$ is determined by $R_1$ and $R_2$ and the central angle $\measuredangle R_1C_2R_2$, which is taken to satisfy $\measuredangle {R_1C_1R_2}=\theta$. 
We can see that the center and radius of $\mathcal{C}_2$ are 
\begin{equation}\label{eq:2.8}
\mathcal{C}_2\left(p_0+p_1\cos \theta+\frac{p_2}{2}\cos 2\theta- \frac{p_2}{2}\sin 2\theta \cot \frac{\theta}{2}, p_1\sin \theta+\frac{p_2}{2}\sin 2\theta+ \frac{p_2}{2}\cos 2\theta \cot \frac{\theta}{2}\right), \ r_2= \frac{p_2}{2}\csc\frac{\theta}{2}.
\end{equation}
Simple calculations prove that $R_1, C_1, C_2$ are collinear, and
\begin{equation}\label{eq:2.9}
r_2-r_1=\|C_2-C_1\|=\frac{p_2-p_1}{2}\csc\frac{\theta}{2},
\end{equation}
i.e. $\mathcal{C}_2$ lies entirely outside $\mathcal{C}_1$ and it touches it only at $R_2$. The general case will be proved below.

Figures \ref{fig3}-\ref{fig5} illustrate the constructions of  $\mathcal{C}_0,\,\mathcal{C}_1,\,\mathcal{C}_2$ for different values of $\theta$. Figure \ref{fig3} illustrates the construction of $\mathcal{C}_0,\,\mathcal{C}_1,\,\mathcal{C}_2$ when $\theta=\frac\pi3$ is an acute angle. We considered two cases when $p_0,\,p_1,\,p_2$ are all different and when two values coincide to each other. The cases when $\theta$ is right or obtuse angles are depicted in Figure \ref{fig4} and Figure \ref{fig5} respectively. The calculations coincide with the geometric construction.  The cases when $\theta=0,\,\pi$ or when $\theta$ lies in the third or fourth quadrants are discussed in the general setting of the next section. 

%%%%%%%%%%%%%%%%%%%%%%%%%%%%%%%%%%%%%%%%%%%%%%%%%%%%%
\begin{figure*}[!ht]
	\centering
	\subfloat[]{\begin{tikzpicture}
			\draw[thick] (0.5,0.86602540378)circle(1);
			\draw[thick] (0,1.73205080757)circle(2);
			\draw[thick] (-1,1.73205080757)circle(3);
			\draw[thick] (0,0)--(1,0); \draw[thick,blue](1,0)--(0,1.73205080757);
			\draw[thick](0.5,0.86602540378)--(0,0);
			\draw[thick](2,1.73205080757)--(1,0);
			\draw[thick](2,1.73205080757)--(0,1.73205080757);
			\draw[thick](2,1.73205080757)--(0.5,4.33012701892);
			\draw[thick](-1,1.73205080757)--(0.5,4.33012701892);
			\draw[thick,magenta](-1,1.73205080757)--(2,1.73205080757);
			\node[below] at (0,0) {$O$};\filldraw[black] (0,0) circle (2pt);
			\node[right,magenta] at (0.5,0.86602540378) {$C_0$}; \filldraw[magenta] (0.5,0.86602540378) circle (2pt); \node[below,blue] at (1,0) {$R_0$}; \filldraw[blue] (1,0) circle (2pt);
			\node[right,cyan] at (1.5,0.86602540378) {$S_1$};  \filldraw[yellow] (1.5,0.86602540378) circle (2pt);
			\node[above,magenta] at (0,1.73205080757) {$C_1$};
			\filldraw[magenta] (0,1.73205080757) circle (2pt);
			\node[right,blue] at (2,1.73205080757) {$R_1$};
			\filldraw[blue] (2,1.73205080757) circle (2pt);
			\node[left,cyan] at (1.4,3.6) {$S_2$};
			\filldraw[yellow]  (1.02,3.46) circle (2pt);
			\node[left,magenta] at (-1,1.73205080757) {$C_2$}; 
			\filldraw[magenta] (-1,1.73205080757) circle (2pt);
			\node[above,blue] at (0.5,4.33012701892) {$R_2$};
			\filldraw[blue] (0.5,4.33012701892) circle (2pt); 
			\draw[thick,dashed] (1,0)--(2.6,0);
			\draw[thick,dashed] (2,1.73205080757)--(3,3.46410161514);
			\coordinate (a) at (0,0); \coordinate (b) at (0.5,0.86602540378); \coordinate (c) at (1,0); \draw pic[draw,angle radius=0.25cm,"$\theta$",below] {angle=a--b--c};
			\coordinate (a) at (1,0); \coordinate (b) at (0,1.73205080757); \coordinate (c) at (2,1.73205080757); \draw pic[draw,angle radius=0.25cm,"$\theta$",below right] {angle=a--b--c};
			\coordinate (a) at (2,1.73205080757); \coordinate (b) at (-1,1.73205080757); \coordinate (c) at (0.5,4.33012701892); \draw pic[draw,angle radius=0.25cm,"$\theta$", above right] {angle=a--b--c};
			\coordinate (a) at (3,3.46410161514); \coordinate (b) at (2,1.73205080757); \coordinate (c) at (0.5,4.33012701892); \draw pic[draw,angle radius=0.35cm,"$\theta$",above right] {angle=a--b--c};
			\coordinate (a) at (2,0); \coordinate (b) at (1,0); \coordinate (c) at (2,1.73205080757); \draw pic[draw,angle radius=0.25cm,"$\theta$",right] {angle=a--b--c};
			\node[black, below left] at (-0.5,1) {$\mathcal{C}_0$};
			\node[black, below left] at (-2,1) {$\mathcal{C}_1$};
			\node[black, below left] at (-4,1) {$\mathcal{C}_2$};
	\end{tikzpicture}}
	\subfloat[]{\begin{tikzpicture}[scale=1.45]
			\draw[thick] (0.5,0.86602540378)circle(1);
			\draw[thick] (0,1.73205080757)circle(2);
			\draw[thick] (0,0)--(1,0); \draw[thick,blue](1,0)--(0,1.73205080757);
			\draw[thick](0.5,0.86602540378)--(0,0);
			\draw[thick](2,1.73205080757)--(1,0);
			\draw[thick,magenta](2,1.73205080757)--(0,1.73205080757);
			\draw[thick](2,1.73205080757)--(1.05,3.45);
			\draw[thick](1.05,3.45)--(0,1.73205080757);
			\node[below,red] at (0,0) {$O$};
			\filldraw[black] (0,0) circle (1.5pt);\node[right,magenta] at (0.5,0.86602540378) {$C_0$};
			\filldraw[magenta] (0.5,0.86602540378) circle (1.5pt); \node[below,blue] at (1,0) {$R_0$};
			\filldraw[blue] (1,0) circle (1.5pt);
			\node[right,cyan] at (1.5,0.86602540378) {$S_1$};
			\filldraw[yellow]  (1.5,0.86602540378) circle (1.5pt);
			\node[left,magenta] at (0,1.73205080757) {$C_1=C_2$};
			\filldraw[magenta]  (0,1.73205080757) circle (1.5pt);
			\node[right,blue] at (2,1.73205080757) {$R_1$};
			\filldraw[blue]  (2,1.73205080757) circle (1.5pt);
			\node[right,blue] at (1.05,3.45) {$S_2=R_2$};
			\filldraw[blue]  (1.05,3.45) circle (1.5pt);
			\draw[thick,dashed] (1,0)--(2.6,0);
			\draw[thick,dashed] (2,1.73205080757)--(3,3.46410161514);
			\coordinate (a) at (0,0); \coordinate (b) at (0.5,0.86602540378); \coordinate (c) at (1,0); \draw pic[draw,angle radius=0.25cm,"$\theta$",below] {angle=a--b--c};
			\coordinate (a) at (1,0); \coordinate (b) at (0,1.73205080757); \coordinate (c) at (2,1.73205080757); \draw pic[draw,angle radius=0.25cm,"$\theta$",below right] {angle=a--b--c};
			\coordinate (a) at (2,1.73205080757); \coordinate (b) at (0,1.73205080757); \coordinate (c) at (1.05,3.45); \draw pic[draw,angle radius=0.3cm,"$\theta$", above right] {angle=a--b--c};
			\coordinate (a) at (3,3.46410161514); \coordinate (b) at (2,1.73205080757); \coordinate (c) at (0.5,4.33012701892); \draw pic[draw,angle radius=0.35cm,"$\theta$",above right] {angle=a--b--c};
			\coordinate (a) at (2,0); \coordinate (b) at (1,0); \coordinate (c) at (2,1.73205080757); \draw pic[draw,angle radius=0.35cm,"$\theta$",right] {angle=a--b--c};
			\node[black, below left] at (-0.5,1) {$\mathcal{C}_0$};
			\node[black, below left] at (-2,1) {$\mathcal{C}_1=\mathcal{C}_2$};
	\end{tikzpicture}}
	%\hspace{0.5mm}
	
	\caption{The construction of $\mathcal{C}_0,\,\mathcal{C}_1,\,\mathcal{C}_2$ when $\theta=\frac{\pi}{3}$. (a) $p_0=1, p_1=2, p_2=3$ are different. (b) $p_0=1, p_1=p_2=2$ and $\mathcal{C}_1,\,\mathcal{C}_2$ are identical. Accidentally  $C_1$ lies on $\mathcal{C}_0.$ Notice that $C_2$ does not lie  on $\mathcal{C}_1$.}
	\label{fig3}
\end{figure*}
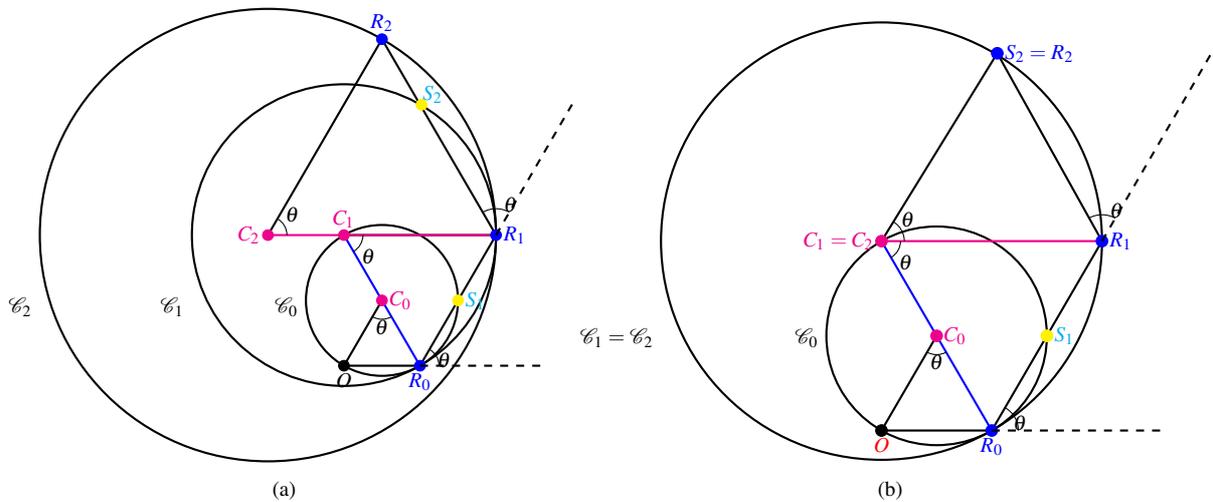
%%%%%%%%%%%%%%%%%%%%%%%%%%%%%%%%%%%%%%

%%%%%%%%%%%%%%%%%%%%%%%%%%%%%%%%%%%%%%%

\begin{figure*}[!ht]
	\centering
	\subfloat[]{\begin{tikzpicture}[scale=1.5]
		\draw[thick] (0.5,0.5)circle(0.70710678118);
		\draw[thick] (0,1)circle(1.41421356237);
		\draw[thick] (-0.5,0.5)circle(2.12132034356);
		\draw[thick] (0,0)--(1,0); \draw[thick,blue](1,0)--(0,1);
		\draw[thick](0.5,0.5)--(0,0);
		\draw[thick](1,2)--(1,0);
		\draw[thick](1,2)--(-2,2);
		\draw[thick,magenta](1,2)--(-0.5,0.5);
		\draw[thick](-0.5,0.5)--(-2,2);
		\node[below] at (0,0) {$O$};
		\filldraw[black]  (0,0) circle (1.5pt);
		\node[right,magenta] at (0.5,0.5) {$C_0$}; 
		\filldraw[magenta]  (0.5,0.5) circle (1.5pt);
		\node[below,blue] at (1,0) {$R_0$};
		\filldraw[blue]  (1,0) circle (1.5pt);
		\node[right,cyan] at (1,1) {$S_1$};
		\filldraw[yellow]  (1,1) circle (1.5pt);
		\node[above,magenta] at (0,1) {$C_1$};
		\filldraw[magenta]  (0,1) circle (1.5pt);
		\node[right,blue] at (1,2) {$R_1$};
		\filldraw[blue]  (1,2) circle (1.5pt);
		\node[left,blue] at (-2,2) {$R_2$};
		\filldraw[blue]  (-2,2) circle (1.5pt);
		\node[left,magenta] at (-0.5,0.5) {$C_2$}; 
		\filldraw[magenta]  (-0.5,0.5) circle (1.5pt);
		\node[above,cyan] at (-1,2) {$S_2$}; 
		\filldraw[yellow]  (-1,2) circle (1.5pt);
		\draw[thick,dashed] (1,0)--(2,0);
		\draw[thick,dashed] (1,2)--(1,3);
		\coordinate (a) at (0,0); \coordinate (b) at (0.5,0.5); \coordinate (c) at (1,0); \draw pic[draw,angle radius=0.25cm,"$\theta$",below] {angle=a--b--c};
		\coordinate (a) at (1,0); \coordinate (b) at (0,1); \coordinate (c) at (1,2); \draw pic[draw,angle radius=0.25cm,"$\theta$", right] {angle=a--b--c};
		\coordinate (a) at (1,2); \coordinate (b) at (-0.5,0.5); \coordinate (c) at (-2,2); \draw pic[draw,angle radius=0.25cm,"$\theta$", above] {angle=a--b--c};
		\coordinate (a) at (1,3.6); \coordinate (b) at (1,2); \coordinate (c) at (-1,2); \draw pic[draw,angle radius=0.35cm,"$\theta$",above] {angle=a--b--c};
		\coordinate (a) at (2,0); \coordinate (b) at (1,0); \coordinate (c) at (1,2); \draw pic[draw,angle radius=0.35cm,"$\theta$",right] {angle=a--b--c};
		\node[black, below left] at (-0.15,0.25) {$\mathcal{C}_0$};
		\node[black, below left] at (-1.25,0.25) {$\mathcal{C}_1$};
		\node[black, below left] at (-2.75,0.25) {$\mathcal{C}_2$};
	\end{tikzpicture}}
	\subfloat[]{\begin{tikzpicture}[scale=2.2]
			\draw[thick] (0.5,0.5)circle(0.70710678118);
			\draw[thick] (0,1)circle(1.41421356237);
			\draw[thick] (0,0)--(1,0); \draw[thick,blue](1,0)--(0,1);
			\draw[thick](0.5,0.5)--(0,0);
			\draw[thick](1,2)--(1,0);
			\draw[thick](1,2)--(-1,2);
			\draw[thick, magenta](1,2)--(0,1);
			\draw[thick](-1,2)--(0,1);
			\node[below] at (0,0) {$O$};
			\filldraw[black]  (0,0) circle (1.0pt);
			\node[right, magenta] at (0.5,0.5) {$C_0$};
			\filldraw[magenta]  (0.5,0.5) circle (1.0pt); \node[below,blue] at (1,0) {$R_0$};
			\filldraw[blue]  (1,0) circle (1.0pt);
			\node[right,green] at (1,1) {$S_1$};
			\filldraw[yellow]  (1,1) circle (1.0pt);
			\node[left,magenta] at (0,1) {$C_1=C_2$};
			\filldraw[magenta]  (0,1) circle (1.0pt);
			\node[right,blue] at (1,2) {$R_1$};
			\filldraw[blue]  (1,2) circle (1.0pt);
			\node[above left,blue] at (-1,2) {$S_2=R_2$}; 
			\filldraw[blue]  (-1,2) circle (1.0pt);
			\draw[thick,dashed] (1,0)--(2,0);
			\draw[thick,dashed] (1,2)--(1,3);
			\coordinate (a) at (0,0); \coordinate (b) at (0.5,0.5); \coordinate (c) at (1,0); \draw pic[draw,angle radius=0.15cm,"$\theta$",below] {angle=a--b--c};
			\coordinate (a) at (1,0); \coordinate (b) at (0,1); \coordinate (c) at (1,2); \draw pic[draw,angle radius=0.15cm,"$\theta$", right] {angle=a--b--c};
			\coordinate (a) at (1,2); \coordinate (b) at (0,1); \coordinate (c) at (-1,2); \draw pic[draw,angle radius=0.25cm,"$\theta$", above] {angle=a--b--c};
			\coordinate (a) at (1,3.6); \coordinate (b) at (1,2); \coordinate (c) at (-1,2); \draw pic[draw,angle radius=0.35cm,"$\theta$",above] {angle=a--b--c};
			\coordinate (a) at (2,0); \coordinate (b) at (1,0); \coordinate (c) at (1,2); \draw pic[draw,angle radius=0.35cm,"$\theta$",right] {angle=a--b--c};
			\node[black, below left] at (-0.15,0.25) {$\mathcal{C}_0$};
			\node[black, below left] at (-1.25,0.25) {$\mathcal{C}_1=\mathcal{C}_2$};
	\end{tikzpicture}}
	%\hspace{0.5mm}
	
\caption{The circles $\mathcal{C}_0,\,\mathcal{C}_1,\,\mathcal{C}_2$ with a right angle  $\theta=\frac{\pi}{2}$. (a) $p_0=1, p_1=2, p_2=3$ are different. (b) $p_0=1, p_1=p_2=2$ and $\mathcal{C}_1,\,\mathcal{C}_2$ are identical.}
\label{fig4} 
\end{figure*}
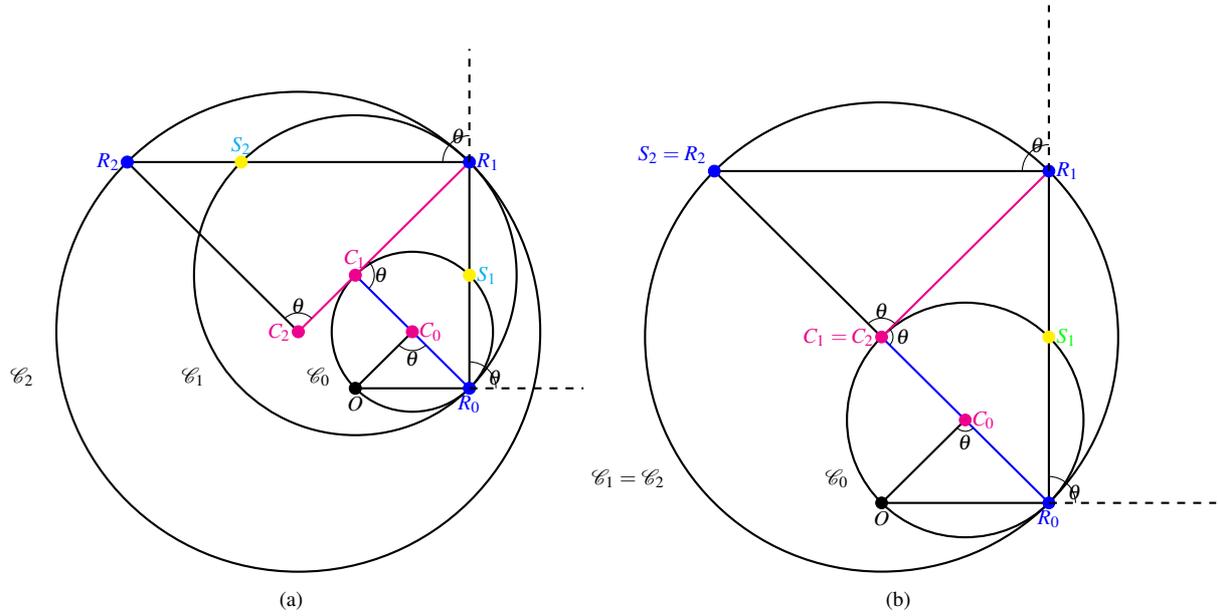

%%%%%%%%%%%%%%%%%%%%%%%%%%%%%%%%%%%%%%%%%%%

\begin{figure*}[!ht]
	\centering
	\subfloat[]{\begin{tikzpicture}[scale=1.7]
			\draw[thick] (0.5,0.28867513459)circle(0.57735026919);
			\draw[thick] (0,0.57735026919)circle(1.15470053838);
			\draw[thick] (0,0)circle(1.73205080757);
			\draw[thick] (0,0)--(1,0); \draw[thick,blue](1,0)--(0,0.57735026919);
			\draw[thick](0.5,0.28867513459)--(0,0);
			\draw[thick](0, 1.73205080757)--(1,0);
			\draw[thick,magenta](0, 1.73205080757)--(0,0);
			\draw[thick](0, 1.73205080757)--(-1.5,-0.86602540378);
			\draw[thick](0,0)--(-1.5,-0.86602540378);
			\node[below,magenta] at (0,0) {$C_2$};
			\filldraw[magenta]  (0,0) circle (1.25pt);
			\node[above,magenta] at (0.5,0.28867513459) {$C_0$};
			\filldraw[magenta]  (0.5,0.28867513459) circle (1.25pt);
			 \node[right,below,blue] at (1,0) {$R_0$};
			 \filldraw[blue]  (1,0) circle (1.25pt);
			\node[left,blue] at (-1.5,-0.86602540378) {$R_2$};
			\filldraw[blue]  (-1.5,-0.86602540378) circle (1.25pt);
			\node[left,magenta] at (0,0.57735026919) {$C_1$};
			\filldraw[magenta]  (0,0.57735026919) circle (1.25pt);
			\node[above,cyan] at (0.5,0.86602540378) {$S_1$};
			\filldraw[yellow] (0.5,0.86602540378) circle (1.25pt);
			\node[left,cyan] at (-1,0) {$S_2$};
			\filldraw[yellow] (-1,0) circle (1.25pt);
			\node[above,blue] at (0,1.73205080757) {$R_1$}; 
			\filldraw[blue] (0,1.73205080757) circle (1.25pt);
			\draw[thick,dashed] (1,0)--(2,0);
			\draw[thick,dashed] (-0.5,2.59807621135)--(0,1.73205080757);
			\coordinate (a) at (0,0); \coordinate (b) at (0.5,0.28867513459); \coordinate (c) at (1,0); \draw pic[draw,angle radius=0.15cm,"$\theta$",below] {angle=a--b--c};
			\coordinate (a) at (1,0); \coordinate (b) at (0,0.57735026919); \coordinate (c) at (0,1.73205080757); \draw pic[draw,angle radius=0.15cm,"$\theta$",right] {angle=a--b--c};
			\coordinate (a) at (0,1.73205080757); \coordinate (b) at (0,0); \coordinate (c) at (-1.5,-0.86602540378); \draw pic[draw,angle radius=0.15cm,"$\theta$", above left] {angle=a--b--c};
			\coordinate (a) at (-0.5,2.59807621135); \coordinate (b) at (0,1.73205080757); \coordinate (c) at (-1.5,-0.86602540378); \draw pic[draw,angle radius=0.25cm,"$\theta$",above left] {angle=a--b--c};
			\coordinate (a) at (2,0); \coordinate (b) at (1,0); \coordinate (c) at (0,1.73205080757); \draw pic[draw,angle radius=0.25cm,"$\theta$",right] {angle=a--b--c};
			\node[black, below left] at (1,1.2) {$\mathcal{C}_0$};
			\node[black, below left] at (-1.1,1) {$\mathcal{C}_1$};
			\node[black, below left] at (-2,1) {$\mathcal{C}_2$};
	\end{tikzpicture}}
	\subfloat[]{\begin{tikzpicture}[scale=2.5]
			\draw[thick] (0.5,0.28867513459)circle(0.57735026919);
			\draw[thick] (0,0.57735026919)circle(1.15470053838);
			\draw[thick] (0,0)--(1,0); \draw[thick](1,0)--(0.5,0.28867513459);
			\draw[thick](0.5,0.28867513459)--(0,0);
			\draw[thick, blue](0,0.57735026919)--(1,0);
			\draw[thick](0, 1.73205080757)--(0,0.57735026919);
			\draw[thick](0, 1.73205080757)--(-1,0);
			\draw[thick](0,0.57735026919)--(-1,0);
			\draw[thick](1,0)--(0,1.73205080757);
			\draw[thick, magenta](0,0)--(0,1.73205080757);
			\node[below] at (0,0) {$O$};
			\filldraw[black] (0,0) circle (0.8pt);
			\node[above,magenta] at (0.5,0.28867513459) {$C_0$};
			\filldraw[magenta] (0.5,0.28867513459) circle (0.8pt);
			 \node[right,below,blue] at (1,0) {$R_0$};
			 \filldraw[blue] (1,0) circle (0.8pt);
			\node[below,magenta] at (0,0.57735026919) {$C_1$};
			\filldraw[magenta] (0,0.57735026919) circle (0.8pt);
			\node[above,cyan] at (0.5,0.86602540378) {$S_1$};
			\filldraw[yellow](0.5,0.86602540378) circle (0.8pt);
			\node[left,blue] at (-1,0) {$S_2=R_2$};
			\filldraw[blue](-1,0) circle (0.8pt);
			\node[above,blue] at (0,1.73205080757) {$R_1$}; 
			\filldraw[blue](0,1.73205080757) circle (0.8pt);
			\draw[thick,dashed] (1,0)--(2,0);
			\draw[thick,dashed] (-0.5,2.59807621135)--(0,1.73205080757);
			\coordinate (a) at (0,0); \coordinate (b) at (0.5,0.28867513459); \coordinate (c) at (1,0); \draw pic[draw,angle radius=0.15cm,"$\theta$",below] {angle=a--b--c};
			\coordinate (a) at (1,0); \coordinate (b) at (0,0.57735026919); \coordinate (c) at (0,1.73205080757); \draw pic[draw,angle radius=0.15cm,"$\theta$",right] {angle=a--b--c};
			\coordinate (a) at (0,1.73205080757); \coordinate (b) at (0,0.57735026919); \coordinate (c) at (-1,0); \draw pic[draw,angle radius=0.25cm,"$\theta$", above left] {angle=a--b--c};
			\coordinate (a) at (-0.5,2.59807621135); \coordinate (b) at (0,1.73205080757); \coordinate (c) at (-1.5,-0.86602540378); \draw pic[draw,angle radius=0.25cm,"$\theta$",above left] {angle=a--b--c};
			\coordinate (a) at (2,0); \coordinate (b) at (1,0); \coordinate (c) at (0,1.73205080757); \draw pic[draw,angle radius=0.25cm,"$\theta$",right] {angle=a--b--c};
			\node[black, below left] at (0,-0.25) {$\mathcal{C}_0$};
			\node[black, below left] at (-1.1,1) {$\mathcal{C}_1=\mathcal{C}_2$};
	\end{tikzpicture}}
	%\hspace{0.5mm}
	
\caption{The construction of $\mathcal{C}_0,\,\mathcal{C}_1,\,\mathcal{C}_2$ when $\theta=\frac{2\pi}{3}$. (a) $p_0=1, p_1=2, p_2=3$ are different. Accidentally $C_2$ coincides with the origin. (b) $p_0=1, p_1=p_2=2$ and $\mathcal{C}_1,\,\mathcal{C}_2$ are identical.}
\label{fig5} 
\end{figure*}
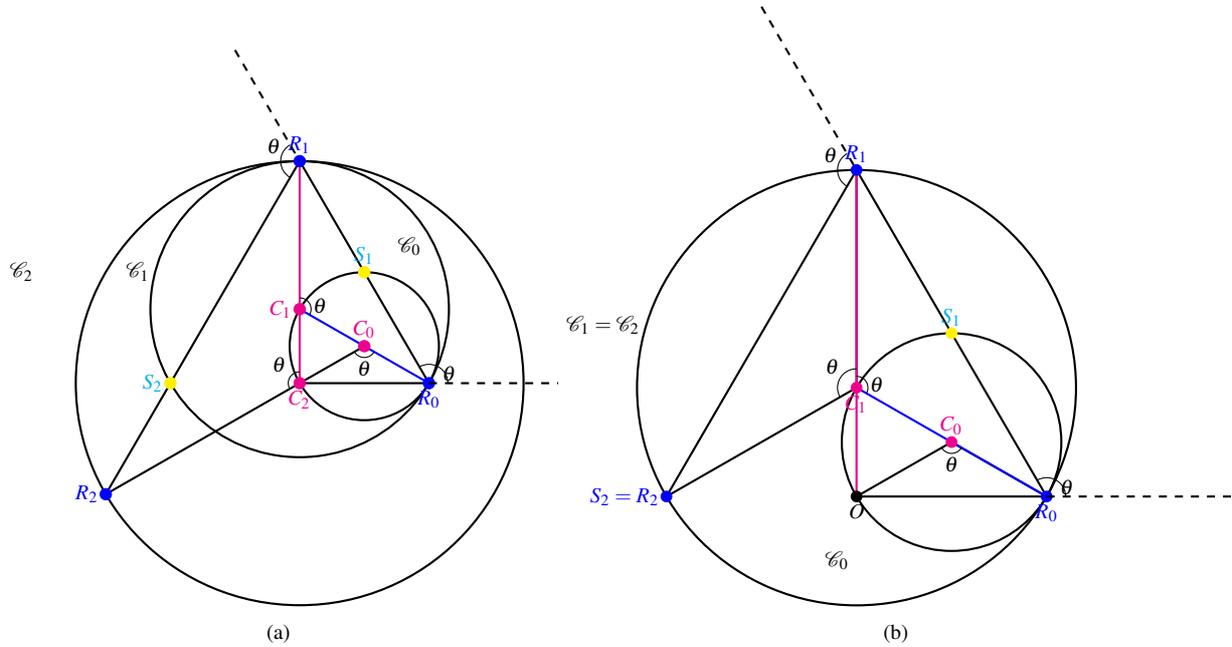

%%%%%%%%%%%%%%%%%%%%%%%%%%%%%%%%%%%%%%%%%%%%%%%%%

%%%%%%%%%%%%%%%%%%%%%%%%%%%%%%%%%%%%%%%%% 
%%%%%%%%%%%%%%%%%%%%%%%%%%%%%%%%%%
\section{The general setting}
%%%%%%%%%%%%%%%%%%%%%%%%%%%%%%%%%%%%%%%%%
In this section we provide a complete proof of the geometric structure (\ref{eq:1.5}) which implies a rigorous geometric proof of (\ref{eq:1.3}), and consequently the proof of Enestr\"om-Kakeya's theorem.

\noindent{\bf Proof of Lemma \ref{lem:1.1}.} We assume without any loss of generality that $0<\theta<\pi$ and that $p_0,\,p_1,\,\cdots,\,p_n$ are distinct. Let $R_k$ have the coordinates $(x_k,y_k),\,k=0,1,\cdots,n$. Hence
\begin{eqnarray}\label{A1}
x_k&=&\left\{\begin{array}{ll}
p_0+p_1\cos\theta+\cdots+p_{k}\cos k\theta,& k\ge1,\\
{}&{}\\
p_0,&k=0,
\end{array}
\right.\\
\label{A2}
y_k&=&\left\{\begin{array}{ll}
p_1\sin\theta+\cdots+p_{k}\sin k\theta,& k\ge1,\\
		{}&{}\\
		0,&k=0.
	\end{array}
	\right.
\end{eqnarray}
The points $S_k,\,k=1,\cdots,n$ are defined via
\begin{equation}\label{A3}
\overrightarrow{OS_{k}}=\left\{\begin{array}{ll}
		\overrightarrow{OR_0}+\overrightarrow{R_0R_1} +\cdots+\overrightarrow{R_{k-2}R_{k-1}}+p_{k-1}e^{ik\theta},& k\ge2,\\
		{}&{}\\
		\overrightarrow{OR_0}+p_0e^{i\theta},&k=1.
	\end{array}
	\right.
\end{equation}
Thus, if the coordinates of $S_k,\,k=1,\cdots,n$ are $(s_{k,x}, s_{k,y})$, then 
\begin{eqnarray}\label{A4}
s_{k,x}&=&\left\{\begin{array}{ll}
	p_0+p_1\cos\theta+\cdots+p_{k-1}\cos (k-1)\theta+p_{k-1}\cos k\theta,& k\ge2,\\
	{}&{}\\
	p_0+p_0\cos\theta,&k=1,
\end{array}
\right.\\
\label{A5}
s_{k,y}&=&\left\{\begin{array}{ll}
	p_1\sin\theta+\cdots+p_{k-1}\sin (k-1)\theta+p_{k-1}\sin k\theta,& k\ge2,\\
	{}&{}\\
	p_0\sin\theta,&k=1,
\end{array}
\right.
\end{eqnarray}

We proved (\ref{eq:1.5}) for $\mathcal{C}_0,\,\mathcal{C}_1,\,\mathcal{C}_2$. Now we accomplish the prove for all $k.$ Indeed, let $k\ge2$. By Kakeya structure, the circle $\mathcal{C}_k$ is constructed to pass through $R_{k-1},\,R_k$ and $\measuredangle {R_{k-1}C_kR_k
}=\theta$. Thus $\mathcal{C}_k$ is completely determined. From triangle $\triangle R_{k-1}C_kR_k$, cf. Figure \ref{An2} (a) the radius of $\mathcal{C}_k$ is
\begin{equation}\label{eq:3.6}
r_k=\frac{p_k}{2}\csc \frac{\theta}{2}.
\end{equation}
To find the center $C_k(h_k,l_k)$, we notice that $C_kM_k\perp R_{k-1}R_k,$ where $M_k$ is the mid-point of $R_{k-1}R_k$, cf. Figure \ref{An2} (a) . Hence, $h_k,\;l_k$ are determined from solving the system
\begin{eqnarray}\label{A6}
	&l_k=\frac{y_{k-1}+y_k}{2}-\frac{x_{k}-x_{k-1}}{y_{k}-y_{k-1}}\left(h_k-\frac{x_{k-1}+x_k}{2}\right),\\
	\label{A7}
&\left(x_k-h_k\right)^2+\left(y_k-l_k\right)^2=\frac{p_k^2}{4}\,\csc^2\frac\theta2,\\
	\label{A8}
&\left(x_{k-1}-h_k\right)^2+\left(y_{k-1}-l_k\right)^2=\frac{p_k^2}{4}\,\csc^2\frac\theta2.
\end{eqnarray}
By algebraic manipulations, the center $C_k(h_k,l_k),\,k\ge2$ is determined via
\begin{eqnarray}\label{A9}
h_k&=&
\frac{x_{k-1}+x_k}{2}-\frac{y_k-y_{k-1}}{2}\,\cot \frac{\theta}{2},\\
\label{A10}
l_k&=& \frac{y_{k-1}+y_k}{2}+\frac{x_k-x_{k-1}}{2}\,\cot \frac{\theta}{2}.
\end{eqnarray}
Using (\ref{A1})-(\ref{A2}) , we obtain
\begin{eqnarray}\label{A11}
	h_k&=&
	p_0+p_1\cos\theta+\cdots+p_{k-1}\cos(k-1)\theta+\frac{p_k}{2}\cos k\theta-\frac{p_k}{2}\sin k\theta \cot \frac{\theta}{2},\\
	\label{A12}
	l_k&=& p_1\sin\theta+\cdots+p_{k-1}\sin(k-1)\theta+\frac{p_k}{2}\sin k\theta+\frac{p_k}{2}\cos k\theta \cot \frac{\theta}{2}.
\end{eqnarray}
Hence, taking into account the results of the previous section, all circles $\mathcal{C}_0,\,\mathcal{C}_1,\cdots,\mathcal{C}_n$ are completely determined. Now we prove that they satisfy  (\ref{eq:1.5}). Indeed, by construction $R_{k-1}\in\mathcal{C}_{k-1}\cap\mathcal{C}_k.$ This can be also verified by direct computations in the equations of $\mathcal{C}_{k-1},\;\mathcal{C}_k.$ Furthermore, $\mathcal{C}_{k-1}\cap\mathcal{C}_k=\left\{ R_{k-1}\right\}$ and $\mathcal{C}_k$ lies entirely outside $\mathcal{C}_{k-1}$, except for they touch each other at $R_{k-1}$ because 
\begin{eqnarray}\label{A13}
	\nonumber\parallel C_{k+1}-C_k\parallel^2&=&\left(\left(\frac{p_k}{2}\cos k \theta+\frac{p_{k+1}}{2}\cos(k+1)\theta\right)+\left(\frac{p_k}{2}\sin k \theta \cot \frac{\theta}{2}-\frac{p_{k+1}}{2}\sin(k+1)\theta \cot \frac{\theta}{2}\right)\right)^2\\
	\nonumber{}{}&{}&+\left(\left(\frac{p_k}{2}\sin k \theta+\frac{p_{k+1}}{2}\sin(k+1)\theta\right)+\left(-\frac{p_k}{2}\cos k \theta \cot \frac{\theta}{2}+\frac{p_{k+1}}{2}\cos(k+1)\theta \cot \frac{\theta}{2}\right)\right)^2\\
	\nonumber{}{}&=&\frac{p_k^2}{2}+\frac{p_{k+1}^2}{2}+\frac{p_kp_{k+1}}{2}\cos \theta +\cot^2\frac{\theta}{2}\left(\frac{p_k^2}{2}-\frac{p_{k+1}^2}{2}+\frac{p_kp_{k+1}}{2}\cos \theta \right)-p_kp_{k+1}\cot \frac{\theta}{2}\sin \theta\\
	{}{}&=&\left(\frac{p_k^2}{4}-\frac{p_kp_{k+1}}{2}+\frac{p_{k+1}^2}{4}\right)\csc^2\frac{\theta}{2}=\left(\frac{p_{k+1}-p_k}{2}\csc \frac{\theta}{2}\right)^2=\left(r_{k+1}-r_k\right)^2.
\end{eqnarray}
Hence (\ref{eq:1.5}) is proved, completing the proof of Lemma \ref{lem:1.1}.\qed

%%%%%%%%%%%%%%%%%%%%%%%%

\begin{figure*}[!ht]
	\centering
	\subfloat[]{\begin{tikzpicture}[rotate=25, scale=1.25]
			\draw[thick] (1,1.5)circle(1.80277563773);
			\draw[thick] (0.5,2.25)circle(2.7041634566);
			\draw[thick] (0,0)--(2,0); \draw[thick](2,0)--(1,1.5);
			\draw[thick](1,1.5)--(0,0);
			\draw[thick](0.5,2.25)--(3.15,2.76);
			\draw[thick](2,0)--(3.15,2.76);
			\node[below,blue] at (0,0) {$R_{k-1}$};
			\filldraw[blue] (0,0) circle (2.0pt);
			\node[below,violet] at (1,0) {$M_{k}$};
			\filldraw[violet] (1,0) circle (2.0pt);
			\node[below,blue] at (0.5,0.5) {$\frac{p_{k}}{2}$};
			\filldraw[blue] (0,0) circle (2.0pt);
			\node[below,blue] at (1.5,0.5) {$\frac{p_{k}}{2}$};
			\node[above,right,magenta] at (1,1.5) {$C_k$};
			\filldraw[magenta] (1,1.5) circle (2.0pt);
			 \node[above,magenta] at (0.5,2.25) {$C_{k+1}$};
			\filldraw[magenta] (0.5,2.25) circle (2.0pt);
			 \node[right,below,blue] at (2,0) {$R_k$};
			\filldraw[blue] (2,0) circle (2.0pt);
			\node[right,below,blue] at (3.55,2.76) {$R_{k+1}$};
			\filldraw[blue] (3.15,2.76) circle (2.0pt);
			\node[right,cyan] at (36/13,24/13) {$S_{k+1}$};
			\filldraw[yellow] (36/13,24/13) circle (2.0pt);
			\draw[thick,dashed] (1,1.5)--(1,0);
			\draw[thick,dashed] (1,1.5)--(0.5,2.25);
			\draw[thick,dashed] (2,0)--(3,0);
			\draw[thick,dashed] (3.15,2.76)--(3.6,3.84);
			\coordinate (a) at (0,0); \coordinate (b) at (1,1.5); \coordinate (c) at (2,0); \draw pic[draw,angle radius=0.3cm,"$\theta$",below] {angle=a--b--c};
			\coordinate (a) at (2,0); \coordinate (b) at (0.5,2.25); \coordinate (c) at (3.15,2.76); \draw pic[draw,angle radius=0.3cm,"$\theta$",right] {angle=a--b--c};
			\coordinate (a) at (3,0); \coordinate (b) at (2,0); \coordinate (c) at (3.15,2.76); \draw pic[draw,angle radius=0.3cm,"$\theta$",right] {angle=a--b--c};
			\node[black, below left] at (-0.5,2.8) {$\mathcal{C}_k$};
			\node[black, below left] at (-2.2,2.8) {$\mathcal{C}_{k+1}$};
	\end{tikzpicture}}
	\subfloat[]{\begin{tikzpicture}[rotate=25, scale=1.2]
			\draw[thick] (1,-1.5)circle(1.80277563773);
			\draw[thick] (0.5,-2.25)circle(2.7041634566);
			\draw[thick] (0,0)--(2,0); \draw[thick](2,0)--(1,-1.5);
			\draw[thick](1,-1.5)--(0,0);
			\draw[thick](0.5,-2.25)--(3.15,-2.76);
			\draw[thick](2,0)--(3.15,-2.76);
			\node[below,blue] at (0,0) {$R_{k-1}$};
			\filldraw[blue] (0,0) circle (2.0pt);
			\node[above,right,magenta] at (1,-1.5) {$C_k$};
			\filldraw[magenta] (1,-1.5) circle (2.0pt);
			 \node[above,magenta] at (0.5,-2.25) {$C_{k+1}$};
			 \filldraw[magenta] (0.5,-2.25) circle (2.0pt); \node[right,below,blue] at (2,0) {$R_k$};
			\filldraw[blue] (2,0) circle (2.0pt);
			\node[right,below,blue] at (3.55,-2.76) {$R_{k+1}$};
			\filldraw[blue] (3.15,-2.76) circle (2.0pt);
			\node[right,cyan] at (36/13,-24/13) {$S_{k+1}$};
			\filldraw[yellow] (36/13,-24/13) circle (2.0pt);
			\draw[thick,dashed] (1,-1.5)--(1,0);
			\draw[thick,dashed] (1,-1.5)--(0.5,-2.25);
			\draw[thick,dashed] (2,0)--(3,0);
			\draw[thick,dashed] (3.15,-2.76)--(3.6,-3.84);
			\coordinate (a) at (0,0); \coordinate (b) at (1,-1.5); \coordinate (c) at (2,0); \draw pic[draw,angle radius=0.3cm,"$\pi+\theta$",below] {angle=a--b--c};
			\coordinate (a) at (2,0); \coordinate (b) at (0.5,-2.25); \coordinate (c) at (3.15,-2.76); \draw pic[draw,angle radius=0.3cm,"$\pi+\theta$",right] {angle=a--b--c};
			\coordinate (a) at (3,0); \coordinate (b) at (2,0); \coordinate (c) at (3.15,-2.76); \draw pic[draw,angle radius=0.3cm,"$\pi+\theta$",right] {angle=a--b--c};
			\node[black, below left] at (-0.5,-2.8) {$\mathcal{C}_k$};
			\node[black, below left] at (-2.2,-2.8) {$\mathcal{C}_{k+1}$};
	\end{tikzpicture}}
	%\hspace{0.5mm}
	
	\caption{The construction of $\mathcal{C}_k,\,\mathcal{C}_{k+1},\,\mathcal{C}_2$. (a) $C_{k+1}$ lies entirely outside $C_k$ and it only touches it at $R_k$. (b) $C_{k+1}$ lies entirely outside $C_k$ and it only touches it at $R_k$ when$\theta^*$ lies in the third or fourth quadrants.}
	\label{An2} 
\end{figure*}
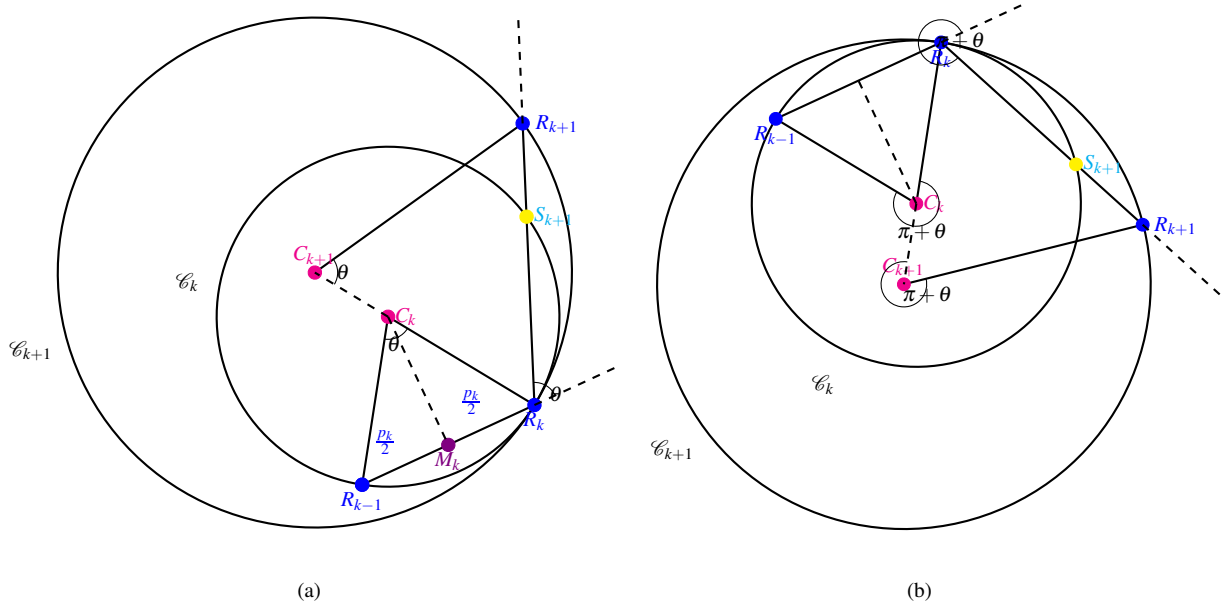

%%%%%%%%%%%%%%%%%%%%%%%%%%%%%%%%%%%%%%%
Lemma \ref{lem:1.1} remains true when $\theta=0$, because in this case 
\begin{equation}\label{eq:3.15}
	R_0=p_0>0, \ R_n=\sum\limits_{j=0}^{n}p_j>0,
\end{equation}
i.e. all points lie on the positive part of the $X$-axis. The circles  $\mathcal{C}_0,\;\mathcal{C}_1,\cdots, \mathcal{C}_n$ are reduced to line segments. Nevertheless, $R_n\ne0$. The lemma is also true if $\theta=\pi$, because
\begin{equation}\label{eq:3.16}
	R_0=p_0>0, \ R_n=p_0-p_1+p_2-\cdots+(-1)^np_n.
\end{equation}
When $n=2m+1$ is odd, $m \geq 0$, then 
\begin{equation}\label{eq:3.17}
	R_n=(p_0-p_1)+(p_2-p_3)+\cdots+(p_{2m}-p_{2m+1})<0,
\end{equation}
i.e. $R_n\ne0$. Here the points $R_j$ lie alternately on the positive and negative parts of the $X$-axis. If $n=2m$ is even, $m \geq 1$, then 
\begin{equation}\label{eq:3.18}
	R_n=p_0+(p_2-p_1)+\cdots+(p_{2m}-p_{2m-1})>0.
\end{equation}
i.e. $R_n\ne0$. Also here the points $R_j$ lie alternately on the positive and negative parts of the $X$-axis. Thus the lemma is true if $\theta=\pi$. The proofs when $\theta$ lies in the third or fourth quadrants results from the previous ones. According to Figure \ref{An2} (b), we can demonstrate a construction of $R_n$ such that Lemma \ref{lem:1.1} is true for $\theta^*=\pi +\theta$, $0<\theta<\pi$ and deduce geometrically that
\begin{equation}
p_0+p_1e^{i\theta^*}+p_2e^{2i\theta^*}+\cdots+p_ne^{in\theta^*}\ne 0.
\end{equation}

\begin{rem}
The points $R_{k-1},\,C_{k-1},\,C_k$ are collinear. This is shown for $\mathcal{C}_0,\,\mathcal{C}_1,\,\mathcal{C}_2$ in the previous section by construction and by \textup{(\ref{eq:2.4})}. By construction, this is also true for  $R_{k-1},\,C_{k-1},\,C_k,\; k\ge2$. Moreover, let

\begin{equation}\label{eq:2.8A}
\Delta_k=\begin{vmatrix}
		x_{k-1} & y_{k-1}  & 1 \\
		h_{k-1} & l_{k-1} & 1 \\
		h_{k} & l_{k} & 1 
	\end{vmatrix},\quad k\ge2.
\end{equation}
From \textup{(\ref{A1})-(\ref{A2})} and \textup{(\ref{A11})-(\ref{A12})}, we get
\begin{eqnarray}\label{A111}
h_k&=&
x_{k-1}+\frac{p_k}{2}\cos k\theta-\frac{p_k}{2}\sin k\theta \cot \frac{\theta}{2},\\
\label{A112}
l_k&=& y_{k-1}+\frac{p_k}{2}\sin k\theta+\frac{p_k}{2}\cos k\theta \cot \frac{\theta}{2}.
\end{eqnarray}
Consequently, using the properties of determinants, we obtain 
\begin{equation}\label{eq:2.8A1}
\Delta_k=\begin{vmatrix}
x_{k-1} &\ y_{k-1}  & 1 \\
-\frac{p_{k-1}}{2}\cos(k-1)\theta-\frac{p_{k-1}}{2}\sin(k-1)\theta \cot \frac{\theta}{2} & \ -\frac{p_{k-1}}{2}\sin(k-1)\theta+\frac{p_{k-1}}{2}\cos(k-1)\theta \cot \frac{\theta}{2} & 0 \\
\frac{p_k}{2}\cos k\theta-\frac{p_k}{2}\sin k\theta \cot \frac{\theta}{2} &\ \frac{p_k}{2}\sin k\theta+\frac{p_k}{2}\cos k\theta \cot \frac{\theta}{2} & 0
\end{vmatrix}=0.
\end{equation}
\end{rem}

In the following section, we introduce another geometrically-based proof of Enestr\"om-Kakeya's theorem. This approach is based on the following lemme.
\begin{lem}\label{lem:3.1}
If $0<q_n\leq q_{n-1}\leq \cdots \leq q_1\leq q_0 $ are positive numbers, $\theta \in \Bbb R$, $n \in \Bbb N_0$, then
\begin{equation}\label{A14}
Q_n=q_0+q_1e^{i\theta}+q_2e^{2i\theta}+\cdots+q_ne^{in\theta}\ne 0.
\end{equation}
\end{lem}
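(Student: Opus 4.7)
My approach is to reduce Lemma \ref{lem:3.1} to Lemma \ref{lem:1.1} by reversing the order of summation, so that no fresh circle computations are required. Multiplying $Q_n$ by $e^{-in\theta}$ and substituting $j=n-k$ gives
\begin{equation*}
e^{-in\theta}Q_n \;=\; \sum_{k=0}^{n} q_k\,e^{-i(n-k)\theta} \;=\; \sum_{j=0}^{n} q_{n-j}\,e^{-ij\theta} \;=\; \sum_{j=0}^{n} p_j\,e^{ij\phi},
\end{equation*}
where $p_j := q_{n-j}$ and $\phi := -\theta$. Because $(q_k)$ is non-increasing with $q_n>0$, the reversed sequence satisfies $0<p_0\leq p_1\leq\cdots\leq p_n$, which is precisely the hypothesis of Lemma \ref{lem:1.1}. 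Applying that lemma at the real angle $\phi$ gives $\sum_{j=0}^{n}p_j\,e^{ij\phi}\neq 0$, and dividing by the nonzero factor $e^{-in\theta}$ yields $Q_n\neq 0$.

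\textbf{Link with internally interlacing circles.} To match the geometric picture promised in the introduction, one applies the formulas of Section 3 directly to the forward partial sums $T_k := \sum_{j=0}^{k} q_j e^{ij\theta}$, with the $p_j$ in (\ref{A11})--(\ref{A12}) replaced by $q_j$. This still defines a circle $\mathcal{D}_k$ through $T_{k-1}$ and $T_k$ with central angle $\theta$ and radius $r_k = \tfrac{q_k}{2}\csc\tfrac{\theta}{2}$; but because $(q_k)$ is now non-increasing, the radii shrink. Since the computation (\ref{A13}) is symmetric in the two consecutive coefficients, it still delivers $\|C_k - C_{k-1}\| = |r_k - r_{k-1}|$, and the hypothesis $q_{k-1}\geq q_k$ then forces $\mathcal{D}_k$ to lie entirely inside $\mathcal{D}_{k-1}$, touching only at $T_{k-1}$. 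Thus the circles $\mathcal{D}_k$ are internally interlacing, recovering Tomic's picture.

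\textbf{Main obstacle.} The algebraic reduction itself is essentially automatic once the index reversal is noticed. The genuine difficulty in a \emph{direct} geometric proof is not in building the nested family but in certifying that the origin lies strictly outside every $\mathcal{D}_k$, the statement dual to ``$O\in\mathcal{C}_0$ and $O$ strictly inside $\mathcal{C}_k$ for $k\geq 1$'' used in the proof of Lemma \ref{lem:1.1}. I expect this can be verified at the level of $\mathcal{D}_0$ and then propagated through the internal nesting, but the reduction route bypasses the issue entirely and is the cleanest way to close the argument.
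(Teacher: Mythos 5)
Your reduction is exactly the paper's own proof of Lemma \ref{lem:3.1}: multiply $Q_n$ by $e^{-in\theta}$, observe that the reversed coefficient sequence $q_n\leq\cdots\leq q_0$ satisfies the hypothesis of Lemma \ref{lem:1.1} at the real angle $-\theta$, and conclude $Q_n\neq 0$ since $e^{-in\theta}\neq 0$. Your supplementary remarks on the internally interlacing circles correspond to what the paper develops separately in its final section as an independent geometric argument, but they are not needed for, and do not change, the proof itself.
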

\begin{proof} Since
\begin{equation}\label{A15}
e^{-in\theta}Q_n=q_0e^{-in\theta}+q_1e^{-i(n-1)\theta}+q_2e^{-i(n-2)\theta}+\cdots+q_{n-1}e^{-i\theta}+q_n,
\end{equation}
then, Lemma \ref{lem:1.1} implies that $e^{-in\theta}Q_n \ne 0,$ proving Lemma \ref{lem:3.1}.\qed
\end{proof}
Using a similar technique of Kakeya, we can prove Enestr\"om-Kakeya's theorem using Lemma \ref{lem:3.1}. However, the proof of Lemma \ref{lem:3.1} depends entirely on Lemma \ref{lem:1.1}, and therefore adds no new. To have a completely independent proof, we can use a technique similar to the geometric structure  (\ref{eq:1.5}), which is done in the next section.

\section{Internally interlacing construction}

 Keeping previous notations, we prove that the points $Q_0=q_0$, $Q_k=\sum_{j=0}^k q_je^{ij\theta}$,$k=1,2,\cdots,n$ lie on the internally interlacing circles $\mathfrak{C}_0,\;\mathfrak{C}_1,\cdots, \mathfrak{C}_n$ with centers and radii $O_0, O_1,\cdots, O_n$ and $\rho_0, \rho_1,\cdots, \rho_n$ respectively, for which 
\begin{equation}\label{A16}
\mathfrak{C}_{k-1}\cap\mathfrak{C}_k=\{Q_{k-1}\}, \ \rho_{k-1}-\rho_k=\|O_{k-1}-O_k\|,
\end{equation}
provided that the $q_j$'s are distinct and $\theta\ne0,\pi.$
%%%%%%%%%%%%%%%%%%%%%%%%

\begin{figure*}[!ht]
	\centering
	\subfloat[]{\begin{tikzpicture}[scale=1.2]
			\draw[thick] (1.5,1.95483805926)circle(2.46401944756);
			\draw[thick] (1.75, 1.62903171605)circle(2.05334953963);
			\draw[thick] (2.5088190451, 1.94334485592)circle(1.23200972378);
			\draw[thick] (0,0)--(3,0) (0,0)--(1.5,1.95483805926) (1.5,1.95483805926)--(3,0) (3,0)--(3.64704761276,2.41481456572) (1.75, 1.62903171605)--(3.64704761276,2.41481456572) (2.5088190451, 1.94334485592)--(2.34800950708,3.16481456572) (3.64704761276,2.41481456572)--(2.34800950708,3.16481456572);
			\draw[thick, blue] (1.5,1.95483805926)--(3,0);
			\draw[thick, magenta] (1.75, 1.62903171605)--(3.64704761276,2.41481456572);
			\node[below] at (0,0) {$O$};
			\filldraw[black] (0,0) circle (2.0pt);
			\node[below,blue] at (3,0) {$Q_0$};
			\filldraw[blue] (3,0) circle (2.0pt);
			 \node[right,above,magenta] at (1.5,1.95483805926) {$O_0$};
			 \filldraw[magenta] (1.5,1.95483805926) circle (2.0pt);
			  \node[right,blue] at (3.64704761276,2.41481456572) {$Q_1$};
			  \filldraw[blue] (3.64704761276,2.41481456572) circle (2.0pt);
			\node[above,blue] at ( 2.34800950708,3.16481456572) {$Q_2$};
			\filldraw[blue] ( 2.34800950708,3.16481456572) circle (2.0pt);
			\node[below,magenta] at (1.75, 1.62903171605) {$O_1$};
			\filldraw[magenta](1.75, 1.62903171605) circle (2.0pt);
			\node[below,magenta] at (2.5088190451, 1.94334485592) {$O_2$};
			\filldraw[magenta](2.5088190451, 1.94334485592) circle (2.0pt);
			\node[right,cyan] at (3.77645713531, 2.89777747887) {$S_1$};
			\filldraw[yellow](3.77645713531, 2.89777747887) circle (2.0pt);
			\node[above,cyan] at (1.4819841033,3.66481456572) {$S_2$};
				\filldraw[yellow](1.4819841033,3.66481456572) circle (2.0pt);
			\draw[thick, dashed] (3,0)--(5,0) (3.64704761276,2.41481456572)--(3.77645713531, 2.89777747887) (2.34800950708,3.16481456572)--(1.4819841033,3.66481456572);
			\coordinate (a) at (0,0); \coordinate (b) at (1.5,1.95483805926); 
			\coordinate (c) at (3,0); \draw pic[draw,angle radius=0.3cm,"$\theta$",below] {angle=a--b--c};
			\coordinate (a) at (3,0); \coordinate (b) at (1.75, 1.62903171605); 
			\coordinate (c) at (3.64704761276,2.41481456572); \draw pic[draw,angle radius=0.3cm,"$\theta$",right] {angle=a--b--c};
			\coordinate (a) at (4,0); \coordinate (b) at (3,0); 
			\coordinate (c) at (3.64704761276,2.41481456572); \draw pic[draw,angle radius=0.3cm,"$\theta$",right] {angle=a--b--c};
			\coordinate (a) at (3.77645713531, 2.89777747887); \coordinate (b) at (3.64704761276,2.41481456572); 
			\coordinate (c) at (2.34800950708,3.16481456572); \draw pic[draw,angle radius=0.3cm,"$\theta$",above] {angle=a--b--c};
			\coordinate (a) at (3.64704761276,2.41481456572); \coordinate (b) at (2.5088190451, 1.94334485592); 
			\coordinate (c) at (2.34800950708,3.16481456572); \draw pic[draw,angle radius=0.3cm,"$\theta$",above] {angle=a--b--c};
			\node[black,below left] at (1,2.5) {$\mathfrak{C}_{2}$};
			\node[black,below left] at (0,3.2) {$\mathfrak{C}_{1}$};
			\node[black,below left] at (-0.5,3.8) {$\mathfrak{C}_{0}$};
	\end{tikzpicture}}
	\subfloat[]{\begin{tikzpicture}[scale=1.2]
			\draw[thick] (1.5,1.95483805926)circle(2.46401944756);
			\draw[thick] (1.75, 1.62903171605)circle(2.05334953963);
			\draw[thick] (0,0)--(3,0) (0,0)--(1.5,1.95483805926) (1.5,1.95483805926)--(3,0) (3,0)--(3.64704761276,2.41481456572) (1.75, 1.62903171605)--(3.64704761276,2.41481456572) (1.75, 1.62903171605)--(1.4819841033,3.66481456572) (3.64704761276,2.41481456572)--(1.4819841033,3.66481456572 );
			\draw[thick, blue] (1.5,1.95483805926)--(3,0);
			\node[below] at (0,0) {$O$};
			\filldraw[black](0,0) circle (2.0pt);
			\node[below,blue] at (3,0) {$Q_0$};
			\filldraw[blue](3,0) circle (2.0pt);
			 \node[right,above,magenta] at (1.5,1.95483805926) {$O_0$}; 
			 \filldraw[magenta](1.5,1.95483805926) circle (2.0pt);
			 \node[right,blue] at (3.64704761276,2.41481456572) {$Q_1$};
			 \filldraw[blue](3.64704761276,2.41481456572) circle (2.0pt);
			\node[above,blue] at (1.4819841033,3.66481456572 ) {$Q_2=S_2$};
			 \filldraw[blue](1.4819841033,3.66481456572) circle (2.0pt);
			\node[below,magenta] at (1.75, 1.62903171605) {$O_1$};
			\filldraw[magenta](1.75, 1.62903171605) circle (2.0pt);
			\node[right,cyan] at (3.77645713531, 2.89777747887) {$S_1$};
			\filldraw[yellow](3.77645713531, 2.89777747887) circle (2.0pt);
			\draw[thick, dashed] (3,0)--(5,0) (3.64704761276,2.41481456572)--(3.77645713531, 2.89777747887);
			\coordinate (a) at (0,0); \coordinate (b) at (1.5,1.95483805926); 
			\coordinate (c) at (3,0); \draw pic[draw,angle radius=0.3cm,"$\theta$",below] {angle=a--b--c};
			\coordinate (a) at (3,0); \coordinate (b) at (1.75, 1.62903171605); 
			\coordinate (c) at (3.64704761276,2.41481456572); \draw pic[draw,angle radius=0.3cm,"$\theta$",right] {angle=a--b--c};
			\coordinate (a) at (4,0); \coordinate (b) at (3,0); 
			\coordinate (c) at (3.64704761276,2.41481456572); \draw pic[draw,angle radius=0.3cm,"$\theta$",right] {angle=a--b--c};
			\coordinate (a) at (3.64704761276,2.41481456572); \coordinate (b) at (1.75, 1.62903171605); 
			\coordinate (c) at (1.4819841033,3.66481456572); \draw pic[draw,angle radius=0.3cm,"$\theta$",above] {angle=a--b--c};
			\node[black,below left] at (0.2,3.2) {$\mathfrak{C}_{1}=\mathfrak{C}_{2}$};
			\node[black,below left] at (-0.5,3.8) {$\mathfrak{C}_{0}$};
	\end{tikzpicture}}
	%\hspace{0.5mm}
	
	\caption{The construction of $\mathfrak{C}_0,\,\mathfrak{C}_1,\,\mathfrak{C}_2$ when $\theta=\frac{5\pi}{12}$. (a) $q_0=3, q_1=2.5, q_2=1.5$ are different. (b) $q_0=3, q_1=q_2=2.5$ and $\mathfrak{C}_1,\,\mathfrak{C}_2$ are identical.}
	\label{An3} 
\end{figure*}
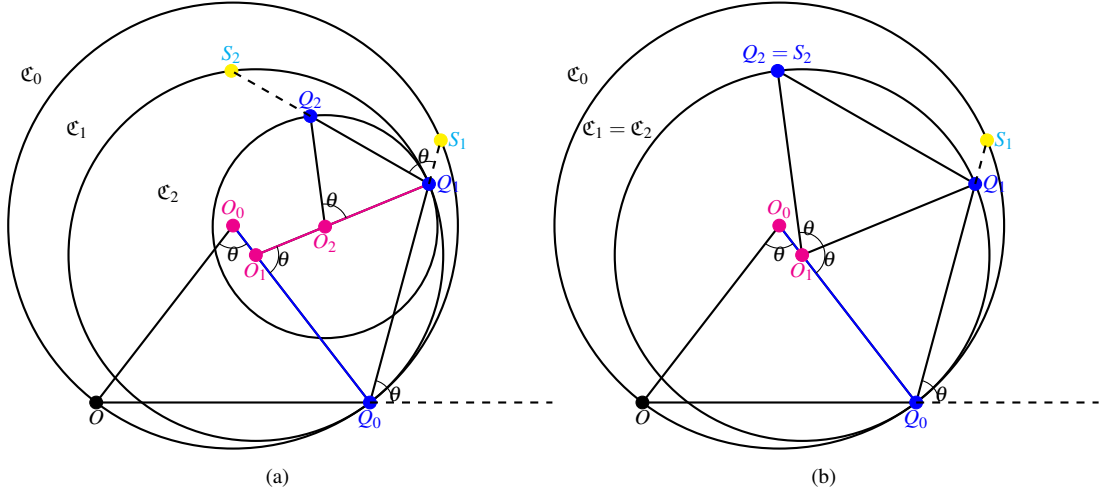
%%%%%%%%%%%%%%%%%%%%%%%%%%%%%%%%%%%%%%%%%%%%%%%%%%%
%%%%%%%%%%%%%%%%%%%%%%%%

\begin{figure*}[!ht]
	\centering
	\subfloat[]{\begin{tikzpicture}[scale=1.4]
			\draw[thick] (1.5,1.5)circle(2.12132034356);
			\draw[thick] (1.75,1.25)circle(1.76776695297);
			\draw[thick] (2.25,1.75)circle(1.06066017178);
			\draw[thick] (0,0)--(3,0) (0,0)--(1.5,1.5) (1.5,1.5)--(3,0) (3,0)--(3,2.5) (3,2.5)--(1.5,2.5) (2.25,1.75)--(1.5, 2.5);
			\draw[thick, blue] (1.5,1.5)--(3,0);
			\draw[thick, magenta] (1.75,1.25)--(3, 2.5);
			\node[below] at (0,0) {$O$};
			\filldraw[black](0, 0) circle (2.0pt);
			\node[below,blue] at (3,0) {$Q_0$};
			\filldraw[blue](3, 0) circle (2.0pt);
			 \node[right,above,magenta] at (1.5,1.5) {$O_0$};
			 \filldraw[magenta](1.5, 1.5) circle (2.0pt);
			  \node[right,blue] at (3,2.5) {$Q_1$};
			  \filldraw[blue](3,2.5) circle (2.0pt);
			\node[above,blue] at (1.5,2.5) {$Q_2$};
			\filldraw[blue](1.5,2.5) circle (2.0pt);
			\node[below,magenta] at (1.75, 1.25) {$O_1$};
			\filldraw[magenta](1.75, 1.25) circle (2.0pt);
			\node[right,magenta] at (2.25, 1.75) {$O_2$};
			\filldraw[magenta](2.25, 1.75) circle (2.0pt);
			\node[right,cyan] at (3,3) {$S_1$};
			\filldraw[yellow](3, 3) circle (2.0pt);
			\node[above,cyan] at (0.5, 2.5) {$S_2$};
			\filldraw[yellow](0.5,2.5) circle (2.0pt);
			\draw[thick, dashed] (3,0)--(4.5,0) (3,2.5)--(3,3) (0.5,2.5)--(1.5, 2.5);
			\coordinate (a) at (0,0); \coordinate (b) at (1.5,1.5); 
			\coordinate (c) at (3,0); \draw pic[draw,angle radius=0.3cm,"$\theta$",below] {angle=a--b--c};
			\coordinate (a) at (3,0); \coordinate (b) at (1.75, 1.25); 
			\coordinate (c) at (3, 2.5); \draw pic[draw,angle radius=0.3cm,"$\theta$",right] {angle=a--b--c};
			\coordinate (a) at (4,0); \coordinate (b) at (3,0); 
			\coordinate (c) at (3,2.5); \draw pic[draw,angle radius=0.3cm,"$\theta$",right] {angle=a--b--c};
			\coordinate (a) at (3,3); \coordinate (b) at (3, 2.5); 
			\coordinate (c) at (0.5, 2.5); \draw pic[draw,angle radius=0.3cm,"$\theta$",above] {angle=a--b--c};
			\coordinate (a) at (3,2.5); \coordinate (b) at (2.25, 1.75); 
			\coordinate (c) at (1.5, 2.5); \draw pic[draw,angle radius=0.3cm,"$\theta$",above] {angle=a--b--c};
			\node[black,below left] at (1.2,2) {$\mathfrak{C}_{2}$};
			\node[black,below left] at (0.5,2.7) {$\mathfrak{C}_{1}$};
			\node[black,below left] at (-0.3,3.3) {$\mathfrak{C}_{0}$};
	\end{tikzpicture}}
	\subfloat[]{\begin{tikzpicture}[scale=1.4]
			\draw[thick] (1.5,1.5)circle(2.12132034356);
			\draw[thick] (1.75,1.25)circle(1.76776695297);
			\draw[thick] (0,0)--(3,0) (0,0)--(1.5,1.5) (1.5,1.5)--(3,0) (3,0)--(3,2.5) (3, 2.5)--(0.5, 2.5);
			\draw[thick, blue] (1.5,1.5)--(3,0) (0.5,2.5)--(1.5,1.5);
			\draw[thick, magenta] (1.75,1.25)--(3, 2.5);
			\node[below] at (0,0) {$O$};
			\filldraw[black](0, 0) circle (2.0pt);
			\node[below,blue] at (3,0) {$Q_0$};
			\filldraw[blue](3, 0) circle (2.0pt); \node[right,above,magenta] at (1.5,1.5) {$O_0$};
			\filldraw[magenta](1.5, 1.5) circle (2.0pt); \node[right,blue] at (3,2.5) {$Q_1$};
			\filldraw[blue](3, 2.50) circle (2.0pt);
			\node[below,magenta] at (1.75, 1.25) {$O_1$};
			\filldraw[magenta](1.75, 1.25) circle (2.0pt);
			\node[right,cyan] at (3,3) {$S_1$};
			\filldraw[yellow](3, 3) circle (2.0pt);
			\node[above,blue] at (0.5, 2.5) {$Q_2=S_2$};
			\filldraw[blue](0.5,2.5) circle (2.0pt);
			\draw[thick, dashed] (3,0)--(4.5,0) (3,2.5)--(3,3);
			\coordinate (a) at (0,0); \coordinate (b) at (1.5,1.5); 
			\coordinate (c) at (3,0); \draw pic[draw,angle radius=0.3cm,"$\theta$",below] {angle=a--b--c};
			\coordinate (a) at (3,0); \coordinate (b) at (1.75, 1.25); 
			\coordinate (c) at (3, 2.5); \draw pic[draw,angle radius=0.3cm,"$\theta$",right] {angle=a--b--c};
			\coordinate (a) at (4,0); \coordinate (b) at (3,0); 
			\coordinate (c) at (3,2.5); \draw pic[draw,angle radius=0.3cm,"$\theta$",right] {angle=a--b--c};
			\coordinate (a) at (3, 2.5); \coordinate (b) at (1.75, 1.25); 
			\coordinate (c) at (0.5,2.5); \draw pic[draw,angle radius=0.2cm,"$\theta$",above] {angle=a--b--c};
			\node[black,below left] at (0.5,2.4) {$\mathfrak{C}_{1}=\mathfrak{C}_{2}$};
			\node[black,below left] at (-0.3,3.3) {$\mathfrak{C}_{0}$};
	\end{tikzpicture}}
	%\hspace{0.5mm}
	
	\caption{The construction of $\mathfrak{C}_0,\,\mathfrak{C}_1,\,\mathfrak{C}_2$ when $\theta=\frac{\pi}{2}$. (a) $q_0=3, q_1=2.5, q_2=1.5$ are different. (b) $q_0=3, q_1=q_2=2.5$ and $\mathfrak{C}_1,\,\mathfrak{C}_2$ are identical.}
	\label{An4} 
\end{figure*}

%%%%%%%%%%%%%%%%%%%%%%%%%%%%%%%%%%%%%%%%%%%%

As is done before, we discuss the construction for $\mathfrak{C}_0,\,\mathfrak{C}_1,\,\mathfrak{C}_2$. We now assume without any loss of generality that $0<\theta<\pi$.  The circle $\mathfrak{C}_0$ is the one that passes through the origin $O$, the point $Q_0=(q_0,0)$ and $S_1=\overrightarrow{OQ_0}+\overrightarrow{Q_0S_1}$, $\overrightarrow{Q_0S_1}=q_0e^{i\theta}$. Thus $S_1=(q_0+q_0\cos\theta, q_0\sin \theta)$.  Therefore, the center and the radius of $\mathfrak{C}_0$ are 
\begin{equation}\label{eq:4.2}
O_0\left(\frac{q_0}{2}, \frac{q_0}{2}\cot \frac{\theta}{2}\right), \ \rho_0= \frac{q_0}{2}\csc\frac{\theta}{2}.
\end{equation}
If $q_0=q_1$, then $Q_1$ is merely $S_1$, and $\mathfrak{C}_1$ coincides with $\mathfrak{C}_0$. Otherwise, $Q_0S_1$ is reduced   to $Q_0Q_1$, where $\overrightarrow{Q_0S_1}\parallel \overrightarrow{Q_0Q_1}$ and $\|Q_0Q_1\|=q_1$. Thus $Q_1=(q_0+q_1\cos \theta, q_1\sin \theta)=\overrightarrow{OQ_1}$. The circle $\mathfrak{C}_1$ is determined by $Q_0, Q_1$ and the central angle $\measuredangle Q_0O_1Q_1$, which is taken to be of measure $\theta$. Thus
\begin{equation}\label{eq:4.3}
O_1\left(q_0-\frac{q_1}{2}, \frac{q_1}{2}\cot \frac{\theta}{2}\right), \ \rho_1= \frac{q_1}{2}\csc\frac{\theta}{2}.
\end{equation}
The circle $\mathfrak{C}_1$ lies entirely inside $\mathfrak{C}_0$ and it only touches it at $Q_0$ because 
\begin{equation}\label{eq:4.4}
\rho_0-\rho_1=\|O_0-O_1\|=\frac{q_0-q_1}{2}\csc\frac{\theta}{2}.
\end{equation}
Also $Q_0,\,O_0,\,O_1$ are collinear. The construction of $\mathfrak{C}_2$ is similar to that of $\mathfrak{C}_1$. We first locate $S_2$ whose position vector is 
\begin{equation}\label{eq:4.5}
\overrightarrow{OS_2}=\overrightarrow{OQ_0}+\overrightarrow{Q_0Q_1} +\overrightarrow{Q_1S_2},\quad  \overrightarrow{Q_1S_2}= q_1e^{2i\theta}.
\end{equation}
Thus the coordinates of $S_2$ are 
\begin{equation}\label{eq:4.6}
S_2\left(q_0+q_1\cos \theta+q_1\cos 2\theta, q_1\sin \theta+q_1\sin 2\theta \right).
\end{equation}
Again if $q_1=q_2$, then $S_2=Q_2$, and $\mathfrak{C}_2$ coincides with $\mathfrak{C}_1$. Otherwise, we restrict $\overrightarrow{Q_1S_2}$ to $\overrightarrow{Q_1Q_2}$,
\begin{equation}\label{eq:4.7}
\overrightarrow{Q_1Q_2}\parallel \overrightarrow{Q_1S_2}, \quad \|\overrightarrow{Q_1Q_2}\|=q_2.
\end{equation}
The circle $\mathfrak{C}_2$ is determined by $Q_1$ and $Q_2$ and the central angle $\measuredangle Q_1O_2Q_2$, which is taken to satisfy $\measuredangle {Q_1O_1Q_2}=\theta$. 
Hence
\begin{equation}\label{eq:4.8}
O_2\left(q_0+q_1\cos \theta+\frac{q_2}{2}\cos 2\theta- \frac{q_2}{2}\sin 2\theta \cot \frac{\theta}{2}, q_1\sin \theta+\frac{q_2}{2}\sin 2\theta+ \frac{q_2}{2}\cos 2\theta \cot \frac{\theta}{2}\right), \ \rho_2= \frac{q_2}{2}\csc\frac{\theta}{2},
\end{equation}
and $Q_1, O_1, O_2$ are collinear. Moreover, 
\begin{equation}\label{eq:4.9}
\rho_1-\rho_2=\|O_1-O_2\|=\frac{q_1-q_2}{2}\csc\frac{\theta}{2},
\end{equation}
i.e. $\mathfrak{C}_1$ lies entirely outside $\mathfrak{C}_2$ and it touches it only at $Q_2$.

Figures \ref{An3}-\ref{An5} exhibit the constructions of  $\mathfrak{C}_0,\,\mathfrak{C}_1,\,\mathfrak{C}_2$ for different values of $\theta$. Figure \ref{An3} illustrates the construction of $\mathfrak{C}_0,\,\mathfrak{C}_1,\,\mathfrak{C}_2$ when $\theta=\frac{5\pi}{12}$ is acute. We considered two cases when $q_0,\,q_1,\,q_2$ are all different and when two values coincide to each others. The cases when $\theta$ is a right or an obtuse angle are depicted in Figure \ref{An4} and Figure \ref{An5} respectively. The construction coincide with our calculations.

%%%%%%%%%%%%%%%%%%%%%%%%

\begin{figure*}[!ht]
	\centering
	\subfloat[]{\begin{tikzpicture}[scale=1.7]
			\draw[thick] (1.5,0.86602540378)circle(1.73205080757);
			\draw[thick] (1.75,0.72168783648)circle(1.44337567297);
			\draw[thick] (1.75,1.29903810568)circle(0.86602540378);
			\draw[thick] (0,0)--(3,0) (0,0)--(1.5,0.86602540378) (1.5,0.86602540378)--(3,0) (3,0)--(1.75,2.16506350946) (1.75,2.16506350946)--(1.75, 0.72168783648) (1,0.86602540378)--(1.75,2.16506350946) (1.75, 1.29903810568)--(1,0.86602540378);
			\draw[thick, blue] (1.5,0.86602540378)--(3,0);
			\draw[dashed] (0.5,0)--(1,0.86602540378);
			\draw[thick, magenta] (1.75, 0.72168783648)--(1.75,2.16506350946);
			\node[below] at (0,0) {$O$};
			\filldraw[black](0,0) circle (1.5pt);
			\node[below,blue] at (3,0) {$Q_0$};
			\filldraw[blue](3, 0) circle (2.0pt);
			 \node[left,magenta] at (1.5,0.86602540378) {$O_0$};
			 \filldraw[magenta](1.5,0.86602540378) circle (1.5pt); \node[above,right,blue] at (1.75,2.16506350946) {$Q_1$};
			 \filldraw[blue](1.75,2.16506350946) circle (1.5pt);
			\node[left,blue] at (1,0.86602540378) {$Q_2$};
			\filldraw[blue](1,0.86602540378) circle (1.5pt);
			\node[below,magenta] at (1.75, 0.72168783648) {$O_1$};
			\filldraw[magenta](1.75, 0.72168783648) circle (1.5pt);
			\node[right,magenta] at (1.75, 1.29903810568) {$O_2$};
			\filldraw[magenta](1.75, 1.29903810568) circle (1.5pt);
			\node[above,cyan] at (1.5, 2.59807621135) {$S_1$};
			\filldraw[yellow](1.5, 2.59807621135) circle (1.5pt);
			\node[below,cyan] at (0.5, 0) {$S_2$};
			\filldraw[yellow](0.5, 0) circle (1.5pt);
			\draw[thick, dashed] (3,0)--(4.2,0) (1.5, 2.59807621135)--(1.75,2.16506350946);
			\coordinate (a) at (0,0); \coordinate (b) at (1.5,0.86602540378); 
			\coordinate (c) at (3,0); \draw pic[draw,angle radius=0.2cm,"$\theta$",below] {angle=a--b--c};
			\coordinate (a) at (3,0); \coordinate (b) at (1.75, 0.72168783648); 
			\coordinate (c) at (1.75,2.16506350946); \draw pic[draw,angle radius=0.2cm,"$\theta$",right] {angle=a--b--c};
			\coordinate (a) at (4,0); \coordinate (b) at (3,0); 
			\coordinate (c) at (1.75,2.16506350946); \draw pic[draw,angle radius=0.2cm,"$\theta$",right] {angle=a--b--c};
			\coordinate (a) at (1.5, 2.59807621135); \coordinate (b) at (1.75,2.16506350946); 
			\coordinate (c) at (1,0.86602540378); \draw pic[draw,angle radius=0.2cm,"$\theta$",left] {angle=a--b--c};
			\coordinate (a) at (1.75,2.16506350946); \coordinate (b) at (1.75, 1.29903810568); 
			\coordinate (c) at (1,0.86602540378); \draw pic[draw,angle radius=0.2cm,"$\theta$",left] {angle=a--b--c};
			\node[black,below left] at (0.9,1.5) {$\mathfrak{C}_{2}$};
			\node[black,below left] at (0.6,2) {$\mathfrak{C}_{1}$};
			\node[black,below left] at (0,2.4) {$\mathfrak{C}_{0}$};
	\end{tikzpicture}}
	\subfloat[]{\begin{tikzpicture}[scale=1.7]
			\draw[thick] (1.5,0.86602540378)circle(1.73205080757);
			\draw[thick] (1.75, 0.72168783648)circle(1.44337567297);
			\draw[thick] (0,0)--(3,0) (0,0)--(1.5,0.86602540378) (1.5,0.86602540378)--(3,0) (3,0)--(1.75,2.16506350946) (1.75,2.16506350946)--(1.75, 0.72168783648);
			\draw[thick, blue] (1.5,0.86602540378)--(3,0);
			\draw[thick] (0.5, 0)--(1.75,2.16506350946);
			\draw[thick, magenta] (1.75, 0.72168783648)--(1.75,2.16506350946);
			\node[below] at (0,0) {$O$};
			\filldraw[black](0, 0) circle (1.5pt);
			\node[below,blue] at (3,0) {$Q_0$};
			\filldraw[blue](3, 0) circle (1.5pt);
			 \node[left,magenta] at (1.5,0.86602540378) {$O_0$};
			 \filldraw[magenta](1.5,0.86602540378) circle (1.5pt);
			  \node[above,right,blue] at (1.75,2.16506350946) {$Q_1$};
			 \filldraw[blue](1.75,2.16506350946) circle (1.5pt);
			\node[below,magenta] at (1.75, 0.72168783648) {$O_1=O_2$};
			\filldraw[magenta](1.75, 0.72168783648) circle (1.5pt);
			%\node[right,magenta] at (1.75, 1.29903810568) {$O_2$};
			\node[above,cyan] at (1.5, 2.59807621135) {$S_1$};
			\filldraw[yellow](1.5, 2.59807621135) circle (1.5pt);
			\node[below,blue] at (0.5, 0) {$S_2=Q_2$};
			\filldraw[blue](0.5, 0) circle (1.5pt);
			\draw[thick, dashed] (3,0)--(4.2,0) (1.5, 2.59807621135)--(1.75,2.16506350946);
			\coordinate (a) at (0,0); \coordinate (b) at (1.5,0.86602540378); 
			\coordinate (c) at (3,0); \draw pic[draw,angle radius=0.2cm,"$\theta$",below] {angle=a--b--c};
			\coordinate (a) at (3,0); \coordinate (b) at (1.75, 0.72168783648); 
			\coordinate (c) at (1.75,2.16506350946); \draw pic[draw,angle radius=0.2cm,"$\theta$",right] {angle=a--b--c};
			\coordinate (a) at (4,0); \coordinate (b) at (3,0); 
			\coordinate (c) at (1.75,2.16506350946); \draw pic[draw,angle radius=0.2cm,"$\theta$",right] {angle=a--b--c};
			\node[black,below left] at (0.8,2) {$\mathfrak{C}_{1}=\mathfrak{C}_{2}$};
			\node[black,below left] at (0,2.4) {$\mathfrak{C}_{0}$};
	\end{tikzpicture}}
	%\hspace{0.5mm}
	
	\caption{The construction of $\mathfrak{C}_0,\,\mathfrak{C}_1,\,\mathfrak{C}_2$ when $\theta=\frac{2\pi}{3}$. (a) $q_0=3, q_1=2.5, q_2=1.5$ are different. (b) $q_0=3, q_1=q_2=2.5$ and $\mathfrak{C}_1,\,\mathfrak{C}_2$ are identical.}
	\label{An5} 
\end{figure*}
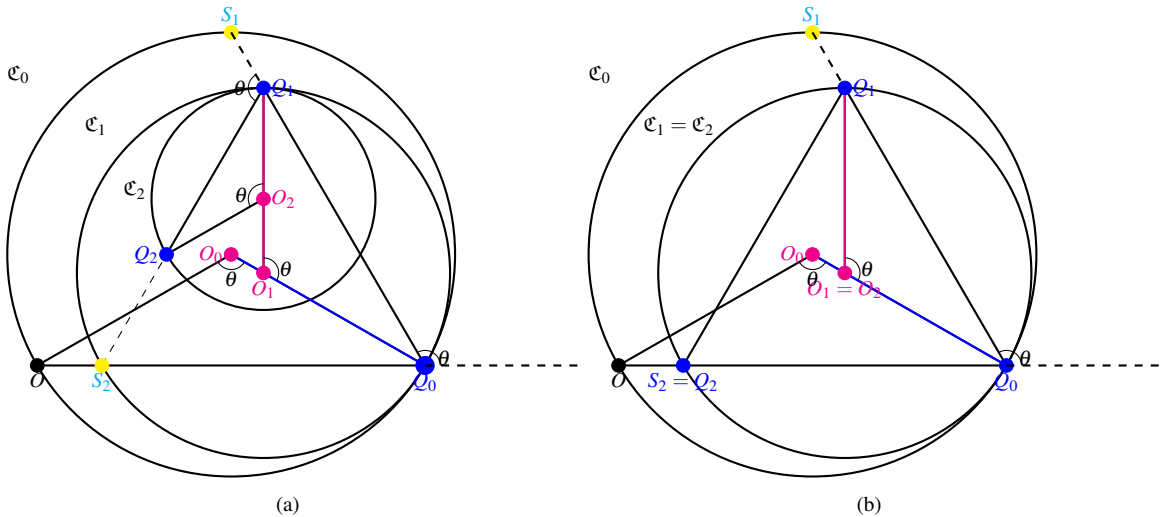

%%%%%%%%%%%%%%%%%%%%%%%%%%%%%%%%%%%%%%%%%%%%

We proved (\ref{A16}) for $\mathfrak{C}_0,\,\mathfrak{C}_1,\,\mathfrak{C}_2$. Now we accomplish the prove for all $k.$ Indeed, let $k\ge2$. Similar to Kakeya structure, the circle $\mathfrak{C}_k$ is constructed to pass through $Q_{k-1},\,Q_k$ and $\measuredangle {Q_{k-1}O_kQ_k
}=\theta$. Thus $\mathfrak{C}_k$ is completely determined. From triangle $\triangle Q_{k-1}O_kQ_k$, cf. Figure \ref{fig11}. The radius of $\mathfrak{C}_k$ is
\begin{equation}\label{eq:4.10}
\rho_k=\frac{q_k}{2}\csc \frac{\theta}{2},
\end{equation}
and the center $O_k(f_k,g_k),\,k\ge2$ is determined via
\begin{eqnarray}\label{eq:4.11}
f_k&=&
\frac{u_{k-1}+u_k}{2}-\frac{v_k-v_{k-1}}2\,\cot \frac{\theta}{2}\\
\label{eq:4.12}
g_k&=& \frac{v_{k-1}+v_k}{2}+\frac{u_k-u_{k-1}}2\,\cot \frac{\theta}{2},
\end{eqnarray}
where $(u_{k}, v_{k})$ are the coordinates of $Q_k$,
\begin{eqnarray}\label{eq:4.13}
u_k&=&\left\{\begin{array}{ll}
q_0+q_1\cos\theta+\cdots+q_{k}\cos k\theta,& k\ge1,\\
{}&{}\\
q_0,&k=0,
\end{array}
\right.\\
\label{eq:4.14}
v_k&=&\left\{\begin{array}{ll}
q_1\sin\theta+\cdots+q_{k}\sin k\theta,& k\ge1,\\
		{}&{}\\
		0,&k=0.
	\end{array}
	\right.
\end{eqnarray}
Using (\ref{eq:4.13})-(\ref{eq:4.14}) , we obtain
\begin{eqnarray}\label{eq:4.15}
	f_k&=&
	q_0+q_1\cos\theta+\cdots+q_{k-1}\cos(k-1)\theta+\frac{q_k}{2}\cos k\theta-\frac{q_k}{2}\sin k\theta \cot \frac{\theta}{2},\\
	\label{eq:4.16}
	g_k&=& q_1\sin\theta+\cdots+q_{k-1}\sin(k-1)\theta+\frac{q_k}{2}\sin k\theta+\frac{q_k}{2}\cos k\theta \cot \frac{\theta}{2}.
\end{eqnarray}
Hence, the circles $\mathfrak{C}_0,\,\mathfrak{C}_1,\cdots,\mathfrak{C}_n$ are all determined. Using the techniques demonstrated above, the construction (\ref{A16}) can be analogously proved, and  the proof of Lemma \ref{lem:3.1} is complete.

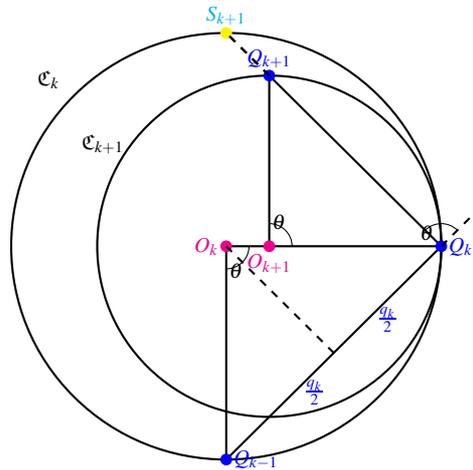
\begin{figure}
\centering\begin{tikzpicture}[scale=0.8, rotate=-45]
\draw[thick] (3.5,2.5)circle(3.53553390593);
\draw[thick] (4,3)circle(2.82842712475);
\draw[thick] (3.5,2.5)--(6,0) (6,0)--(6,5)
(6,5)--(3.5,2.5)  (6,5)--(2,5) (4,3)--(2,5);
 \node[left,magenta] at (3.5,2.5) {$O_k$};
 \filldraw[magenta](3.5,2.5) circle (2.5pt);
  \node[below,right,below,magenta] at (4,3) {$O_{k+1}$};
   \filldraw[magenta](4,3) circle (2.5pt);
 \node[below,blue] at (6,2.05) {$\frac{q_{k}}{2}$};
 \node[below,blue] at (6,3.75) {$\frac{q_{k}}{2}$};
\node[right,blue] at (6,0) {$Q_{k-1}$};
 \filldraw[blue](6,0) circle (2.5pt);
\node[right,blue] at (6,5) {$Q_k$};
 \filldraw[blue](6,5) circle (2.5pt);
\node[above,cyan] at (1,5) {$S_{k+1}$};
 \filldraw[yellow](1,5) circle (2.5pt);
\node[above,blue] at (2,5) {$Q_{k+1}$};
 \filldraw[blue](2,5) circle (2.5pt);
\draw[thick, dashed] (6,5)--(6,6);
\draw[thick, dashed] (2,5)--(1,5);
\draw[thick, dashed] (2,5)--(1,5);
\draw[thick, dashed] (3.5,2.5)--(6,2.5);
\coordinate (a) at (6,0); \coordinate (b) at (3.5,2.5); \coordinate (c) at (6,5); \draw pic[draw,angle radius=0.3cm,"$\theta$",left,below] {angle=a--b--c};
\coordinate (a) at (6,5); \coordinate (b) at (4,3); \coordinate (c) at (2,5); \draw pic[draw,angle radius=0.3cm,"$\theta$",above] {angle=a--b--c};
\coordinate (a) at (6,6); \coordinate (b) at (6,5); \coordinate (c) at (2,5); \draw pic[draw,angle radius=0.3cm,"$\theta$",above,left] {angle=a--b--c};
\node[black, below left] at (1,2.8) {$\mathfrak{C}_{k+1}$};
\node[black, below left] at (-0.5,2.8) {$\mathfrak{C}_k$};
\end{tikzpicture}
\caption{$\mathfrak{C}_{k}$ lies entirely outside $\mathfrak{C}_{k+1}$ and it only touches it at $Q_k$.
}\label{fig11}
\end{figure}

%%%%%%%%%%%%%%%%%%%%%%%%%%%%%%%%%%%%%%%%%%
%%%%%%%%%%%%%%%%%%%%%%%%%%%%%%%%%%%%%%%%%%
%%%%%%%%%%%%%%%%%%%%%%%%%%%%%%%%%%%%%%%%%%
%%%%%%%%%%%%%%%%%%%%%%%%%%%%%%%%%%%%%%%%%%

\section*{Competing interests}

The authors have no relevant financial or non-financial interests  to disclose.
%-----------------------------
\section*{Data Availability Statement}
Data sharing is not applicable to this article as no data sets were generated or analysed during the current study.

%%%%%%%%%%%%%%%%%%%%%%%%%%%%%%%%%%%%%%%%%
%%%%%%%%%%%%%%%%%%%%%%%%%%%%%%%%%%%%%%%%%
%%%%%%%%%%%%%%%%%%%%%%%%%%%%%%%%%%%%%%%%%
%%%%%%%%%%%%%%%%%%%%%%%%%%%%%%%%%%%%%%%%%
%%%%%%%%%%%%%%%%%%%%%%%%%%%%%%%%%%%%%%%%%

%\section*{References}
%%%%%%%%%%%%%%%%%%%%%%%%%%%%%%%%%%%%%%%%%%%%%
\bibliographystyle{abbrv}
{\bibliography{shymaaa_refs_01}}

%\bibliographystyle{elsarticle-num}
%\bibliographystyle{spmpsci}
%\bibliography{shymaaa_refs_01}
%%%%%%%%%%%%%%%%%%%%%%%%%%%%%%%%%%%%%%%%%%%%%
%\bibliography{shymaaa_refs_01}
%%%%%%%%%%%%%%%%%%%%%%%%%%%%%%%%%%%%%%%%%
%%%%%%%%%%%%%%%%%%%%%%%%%%%%%%%%%%%%%%%%%
\end{document}